\newtheorem{theorem}{Theorem}[section]
\newtheorem{lemma}[theorem]{Lemma}
\newtheorem{proposition}[theorem]{Proposition}
\theoremstyle{definition}
\newtheorem*{ack}{Acknowledgements}
\newtheorem*{con}{Conventions}
\newtheorem{remark}[theorem]{Remark}
\newtheorem{example}[theorem]{Example}
\newtheorem{definition}[theorem]{Definition}
\numberwithin{equation}{section} \numberwithin{figure}{section}
\DeclareMathOperator{\Aut}{Aut}
\DeclareMathOperator{\Spec}{Spec}
\DeclareMathOperator{\an}{an}
\DeclareMathOperator{\Hom}{Hom}
\newcommand{\PGL}{\textrm{PGL}}
\newcommand{\Qbar}{\overline{\QQ}}
\newcommand\ZZ{\mathbb{Z}}
\newcommand\QQ{\mathbb{Q}}
\newcommand\CC{\mathbb{C}}
\newcommand\OO{\mathcal{O}}
\definecolor{orange}{rgb}{1,0.5,0}
\title[Integral points on period domains]{Integral points on algebraic subvarieties of period domains: from number fields to finitely generated fields}
\author{Ariyan Javanpeykar}
\address{Ariyan Javanpeykar \\
Institut f\"{u}r Mathematik\\
Johannes Gutenberg-Universit\"{a}t Mainz\\
Staudingerweg 9, 55099 Mainz\\
Germany.}
\email{peykar@uni-mainz.de}
\author{Daniel Litt}
\address{Daniel Litt\\
Institute for Advanced Study\\
1 Einstein Drive,\\
Princeton, NJ 08540}
\email{dlitt@math.ias.edu}
\subjclass[2010]
{14G99 %Arithmetic problems 
(11G35,  %Varieties over global fields
14G05,  %Rational points
14C30, %Hodge theory
32Q45)} %hyperbolicity
\keywords{Period maps, hyperbolicity, function fields, rational points, Shimura varieties, hypersurfaces, Arakelov inequality, Hodge theory, Lang-Vojta conjecture}
\begin{document}

\maketitle
% \tableofcontents

\thispagestyle{empty}
 
 \begin{abstract} We show that for a variety which admits a quasi-finite period map, finiteness (resp.~non-Zariski-density) of $S$-integral points implies finiteness (resp.~non-Zariski-density) of points over all $\mathbb{Z}$-finitely generated integral domains of characteristic zero.  Our proofs rely on foundational results in Hodge theory due to Deligne, Griffiths, and Schmid, and Bakker-Brunebarbe-Tsimerman.  We give straightforward applications to Shimura varieties, locally symmetric varieties,   the moduli space of smooth hypersurfaces in projective space, and the moduli of smooth divisors in an abelian variety.
\end{abstract}

 \section{Introduction}

 The goal of this note is to give arithmetic applications of foundational results in Hodge theory. Our main abstract result (which is a combination of Theorems \ref{thm:intro} and \ref{thm:criterion_nd} below) is:
 
 \begin{theorem}[Main Result, I] \label{thm:main-theorem-intro}
Let $A\subset k = \overline{\mathbb{Q}}$ be a finitely generated subring and let $\mathcal{X}$ be a finite type $A$-scheme such that $\mathcal{X}_k$ is a quasi-projective variety  over $k$ which admits a quasi-finite complex-analytic period map. Then the following statements are equivalent.
\begin{enumerate}
\item For every finitely generated subring $A'\subset k$ containing $A$, the set $\mathcal{X}(A')$ is finite (resp.~not Zariski-dense in $\mathcal{X}(k)$).
\item For every finitely generated integral domain $B$ containing  $A$, the set $\mathcal{X}(B)$ is finite (resp.~not Zariski-dense in $\mathcal{X}(\overline{\text{Frac}(B)})$) (where $\overline{\text{Frac}(B)}$ is a choice of algebraic closure of $\text{Frac}(B)$).
\end{enumerate}
\end{theorem}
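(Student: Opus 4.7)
The implication $(2) \Rightarrow (1)$ is immediate: every finitely generated subring of $\Qbar$ is itself a finitely generated integral domain containing $A$. For the nontrivial direction $(1) \Rightarrow (2)$, my plan is to combine a base-change and evaluation argument---which uses $(1)$ to bound the image of a single evaluation map---with a Hodge-theoretic rigidity statement for morphisms into $\mathcal{X}$, which controls the fibers of that evaluation.

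\textbf{Reduction and evaluation.} Fix a finitely generated integral domain $B \supset A$. Set $Y := (\Spec(B \otimes_A \Qbar))_{\mathrm{red}}$, a reduced $\Qbar$-scheme of finite type. Since $B$ is a domain and $A \hookrightarrow \Qbar$, the natural map $B \hookrightarrow B \otimes_A \Qbar$ is injective, inducing an injection $\mathcal{X}(B) \hookrightarrow \Hom_{\Qbar}(Y, \mathcal{X}_{\Qbar})$; it thus suffices to bound this target. Pick any $y_0 \in Y(\Qbar)$; the underlying ring map $B \to \Qbar$ has image in a finitely generated subring $A_0 \subset \Qbar$. Evaluation at $y_0$ then sends $\mathcal{X}(B)$ into $\mathcal{X}(A_0)$, which is finite by hypothesis $(1)$. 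Hence the map $f \mapsto f(y_0)$ already has finite image in $\mathcal{X}(\Qbar)$.

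\textbf{Hodge-theoretic rigidity.} It remains to show that for each $x_0 \in \mathcal{X}_{\Qbar}(\Qbar)$ in this image, the set of morphisms $g \colon Y \to \mathcal{X}_{\Qbar}$ with $g(y_0) = x_0$ is finite (resp.\ has union-of-images not Zariski-dense in $\mathcal{X}_{\Qbar}$). This is the key rigidity for varieties admitting quasi-finite period maps: pulling back the polarized variation of Hodge structure on $\mathcal{X}$ along $g$ produces a polarized VHS on the smooth locus of $Y$, of which Deligne's finiteness theorem provides only finitely many isomorphism classes with prescribed Hodge numbers. The quasi-finiteness of the period map then recovers $g$ from its associated VHS up to finite ambiguity, and the base-point constraint $g(y_0) = x_0$ pins down that ambiguity. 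The Zariski-density refinement additionally invokes the Bakker--Brunebarbe--Tsimerman algebraicity theorem to ensure that the locus swept out by the images of such $g$ is a genuine algebraic subvariety.

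\textbf{Main obstacle.} The principal difficulty lies in the Hodge-theoretic rigidity step, specifically in converting Deligne's finiteness of VHS isomorphism classes into a finiteness of morphisms $g$ into $\mathcal{X}$. This conversion is delicate when the period map is merely quasi-finite rather than injective, and one must carefully exploit the base-point constraint to eliminate the finite ambiguity in each fiber over the period domain. The non-Zariski-density variant is subtler still, since one must control the Zariski closure of the union of images over all $g$; this is where Bakker--Brunebarbe--Tsimerman's algebraicity is essential, promoting an a priori transcendental statement to one about algebraic subvarieties.
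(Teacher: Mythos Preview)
Your argument for the \emph{finiteness} half is correct and is essentially the paper's proof, stripped of its packaging. The paper phrases things through the notions of ``arithmetic hyperbolicity'' and ``geometric hyperbolicity'' and invokes a persistence lemma from \cite{JAut}, but the content is exactly your evaluation-plus-rigidity argument: evaluate at a $\Qbar$-point to land in a finite set $\mathcal{X}(A_0)$, then use Deligne's finiteness of monodromy representations together with the Deligne--Griffiths--Schmid rigidity theorem (and quasi-finiteness of the period map) to conclude that $\Hom_{\Qbar}((Y,y_0),(\mathcal{X}_{\Qbar},x_0))$ is finite for each $x_0$. So for this half the two approaches coincide.

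For the \emph{non-density} half, however, there is a genuine gap. Your plan is to show that, for each $x_0$ in the non-dense closure $\Delta=\overline{\mathcal{X}(A_0)}$, the finitely many $g$ with $g(y_0)=x_0$ sweep out a non-dense locus. But $\Delta$ is in general positive-dimensional, so the union over all $x_0\in\Delta$ is an \emph{infinite} union of images, and pointwise finiteness of the evaluation fibres gives no dimension bound unless the ambient Hom space is itself of finite type. This is precisely what the paper supplies and you do not: the paper proves (via Peters' Arakelov inequality together with the Bakker--Brunebarbe--Tsimerman ampleness theorem) that $X$ is \emph{weakly bounded}, hence that $\Hom_k(C,X)$ sits as a quasi-compact constructible subset of a finite-type Hom scheme. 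Only then does the finite-fibre evaluation map force $\dim(\mathrm{ev}^{-1}\Delta)\leq\dim\Delta<\dim X$, and a generic-point/base-change argument converts this into non-density in $\mathcal{X}_L$. Your invocation of BBT (``to ensure that the locus swept out \ldots\ is a genuine algebraic subvariety'') misidentifies its role: constructibility of the swept locus is automatic; what BBT buys is \emph{ampleness} of the Hodge line bundle, hence the degree bound that makes the Hom space quasi-compact. Note also that the paper formulates weak boundedness only for curves and then inducts on the transcendence degree of $\mathrm{Frac}(B)$ over $\Qbar$, reducing to the case $\dim Y=1$; your direct approach with arbitrary-dimensional $Y$ would require a higher-dimensional Arakelov inequality that is not established here.
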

 
 In other words, for varieties admitting a quasi-finite period map, finiteness of $\mathscr{O}_{K,S}$-points (where $K$ ranges over all number fields and $S$ ranges over all finite collections of finite places of $K$) implies finiteness of $A$-points for all $\mathbb{Z}$-finitely generated integral domains $A$ of characteristic zero, and a similar statement (which requires substantially deeper input) holds for non-Zariski-density of rational points. Both the finiteness and non-density results require input from Hodge theory.  Arguably, the novel technical result in our proof of Theorem \ref{thm:main-theorem-intro} is  Theorem \ref{thm:criterion_nd}.
 
 We proceed to give various applications of these results, by applying them to various moduli spaces/stacks. For example, our first result extends Lawrence-Sawin's finiteness result for smooth hypersurfaces in an abelian variety from number fields \cite{LS} to  finitely generated fields of characteristic zero.
 
 \begin{theorem}[Main Result, II, Lawrence-Sawin + $\epsilon$]\label{thm:ls}
 Let $K$ be a number field, let $S$ be a finite set of finite places of $K$, and let $\mathcal{A}$ be an abelian scheme over $\mathcal{O}_{K,S}$. Let $D$ be an ample divisor on $\mathcal{A}_K$. Then, for any $\mathcal{O}_{K,S}$-finitely generated normal integral domain $R$ of characteristic zero, the set of $R$-smooth hypersurfaces $H\subset \mathcal{A}_R$ such that $H$ represents $D_{\mathrm{Frac}(R)}$ on $\mathcal{A}_{\mathrm{Frac}(R)}$ is finite.
 \end{theorem}
 
  We stress that the bulk of the work to prove Theorem \ref{thm:ls} is contained in the paper of Lawrence-Sawin \cite{LS}. We only \emph{combine} their work with ours to prove more general finiteness statements over finitely generated fields of characteristic zero (as opposed to only number fields).
 
Note that Theorem \ref{thm:ls} shows that the Shafarevich conjecture for smooth hypersurfaces in an abelian variety over a number field (as proven by Lawrence-Sawin) persists over finitely generated fields of characteristic zero. Also, note that the proof of Theorem \ref{thm:ls} is a straightforward consequence of our main abstract result (Theorem \ref{thm:main-theorem-intro}) and the following two facts due to Lawrence-Sawin: 
\begin{enumerate}
\item  Lawrence-Sawin's main theorem \cite[Theorem~1.1]{LS} which says that Theorem \ref{thm:ls} holds whenever $\dim R =1$;
\item The moduli space of smooth hypersurfaces in an abelian variety admits a quasi-finite period map; see \cite[Proposition~5.10]{LS};
 \end{enumerate}

Our next result is of similar nature, but is instead concerned with the Shafarevich conjecture for smooth hypersurfaces in projective space (as opposed to smooth hypersurfaces in a fixed abelian variety).

\begin{theorem}[Main Result, III]\label{thm:hypsurf_intro} Let $d\geq 3$ be an integer and let $n\geq 2$. Assume that, for every number field $K$ and every finite set of finite places $S$ of $K$, the set of $\OO_{K,S}$-isomorphism classes of smooth hypersurfaces of degree $d$ in $\mathbb{P}^{n+1}_{\OO_{K,S}}$ is finite. Then, for every $\ZZ$-finitely generated normal integral domain $A$ of characteristic zero, the set of $A$-isomorphism classes of smooth hypersurfaces of degree $d$ in $\mathbb{P}^{n+1}_{A}$ is finite.
\end{theorem}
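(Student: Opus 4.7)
The plan is to apply Theorem \ref{thm:main-theorem-intro} to the coarse moduli space $H_{d,n}$ of smooth hypersurfaces of degree $d$ in $\PP^{n+1}$. First I would construct $H_{d,n}$ as a quasi-projective scheme over $\QQ$ (in fact, over $\Spec\ZZ[1/N]$ for some integer $N$) via Mumford's geometric invariant theory: the natural $\PGL_{n+2}$-action on the locus $\mathcal{U}_{d,n} \subset \PP(\HH^0(\PP^{n+1},\OO(d)))$ of smooth hypersurfaces has GIT-stable orbits for $d \geq 3$, giving a quasi-projective quotient. The key Hodge-theoretic input to verify is that $H_{d,n}$ admits a quasi-finite complex-analytic period map, coming from the polarized variation of Hodge structure on the primitive middle cohomology of the universal hypersurface. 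This should follow from the infinitesimal Torelli theorem of Griffiths combined with the generic Torelli results of Donagi and others; the most degenerate case -- cubic surfaces, where the primitive middle Hodge structure carries no variation -- must be handled separately by replacing the middle cohomology VHS with the variation on an auxiliary cyclic cover of $\PP^{n+1}$ branched along the hypersurface, following Allcock--Carlson--Toledo.

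Granted quasi-finiteness of the period map, Theorem \ref{thm:main-theorem-intro} applied to $H_{d,n}$ immediately yields that finiteness of $H_{d,n}(\OO_{K,S})$ for all number fields $K \subset \overline{\QQ}$ and finite sets of finite places $S$ is equivalent to finiteness of $H_{d,n}(A)$ for every $\ZZ$-finitely-generated integral domain $A$ of characteristic zero. What remains is to reconcile $A$-points of the coarse moduli space with $A$-isomorphism classes of smooth hypersurfaces in $\PP^{n+1}_A$. Since smooth hypersurfaces of degree $d \geq 3$ have finite automorphism groups (Matsumura--Monsky), the moduli stack $\mathcal{H}_{d,n} = [\mathcal{U}_{d,n}/\PGL_{n+2}]$ is Deligne--Mumford; the map from isomorphism classes to $H_{d,n}(A)$ then has fibers controlled by the étale cohomology $\HH^1_{\et}(\Spec A, \Aut(X))$ with values in the finite automorphism group of the hypersurface, and this set is finite when $A$ is a finitely generated normal integral domain of characteristic zero (a generalization of the Hermite--Minkowski theorem). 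A technically cleaner route is to work throughout with a fine moduli scheme obtained by adding a level structure on the middle cohomology mod $\ell$ for some $\ell \geq 3$: this scheme is finite étale over a suitable cover of $H_{d,n}$ and still admits a quasi-finite period map, which bypasses the stacky bookkeeping.

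The main obstacle I expect is the Hodge-theoretic input: establishing quasi-finiteness of the period map uniformly across all $(d,n)$ with $d \geq 3$, $n \geq 2$. The required Torelli-type results are classical but scattered across the literature, and the exceptional cases -- most notably cubic surfaces, where the primitive middle Hodge structure is trivial -- require distinct auxiliary constructions of the type mentioned above. Once quasi-finiteness of the period map is in hand, the remainder of the argument is a routine combination of Theorem \ref{thm:main-theorem-intro} with standard stack/coarse-space formalism and the finiteness of étale cohomology with finite coefficients on Spec of a finitely generated normal integral domain in characteristic zero.
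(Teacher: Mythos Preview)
Your alternative route via a fine moduli scheme with level structure is essentially what the paper does: it uses the finite \'etale atlas $U\to\mathcal{C}_{d;n,\QQ}$ constructed in \cite{Javan3}, applies Theorem~\ref{thm:intro} to the scheme $U$ (which carries a quasi-finite period map by Griffiths's infinitesimal Torelli, with the $(d,n)=(3,2)$ case handled via a cyclic cover as you suggest), and then transfers the conclusion back to the stack via the stacky Chevalley--Weil theorem and a ``twisting lemma'' from \cite{JLalg}. So your instinct that this is the cleaner route is correct, and it matches the paper.

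Your primary route via the coarse space $H_{d,n}$, however, has a genuine gap. You reconcile $A$-points of $H_{d,n}$ with isomorphism classes of hypersurfaces only in one direction --- from finiteness of $H_{d,n}(A)$ to finiteness of isomorphism classes. But to feed into Theorem~\ref{thm:main-theorem-intro} you need the \emph{other} direction at the level of $\OO_{K,S}$: the hypothesis gives finiteness of isomorphism classes of hypersurfaces in $\PP^{n+1}_{\OO_{K,S}}$, and you need finiteness of $H_{d,n}(\OO_{K,S})$. This is not automatic, for two reasons. First, $\OO_{K,S}$-points of the coarse space need not lift to $\OO_{K,S}$-points of the stack $\mathcal{C}_{d;n}$. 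Second, even $\OO_{K,S}$-points of the stack correspond to smooth hypersurfaces in \emph{Brauer--Severi schemes} over $\OO_{K,S}$, not in $\PP^{n+1}_{\OO_{K,S}}$. The paper bridges the second gap explicitly, invoking Borel--Serre finiteness of $\PGL_{n+2}$-torsors over $\OO_{K,S}$ (Lemma~\ref{lem:finiteness}) together with \cite[Lemma~4.8]{JL} to pass from the hypothesis to arithmetic hyperbolicity of the stack $\mathcal{C}_{d;n,\Qbar}$; it avoids the first gap entirely by never working on the coarse space. There is also a subtlety you do not address in asserting that $H_{d,n}$ itself admits a locally liftable period map: the local system underlying the VHS lives on the stack, and at points with nontrivial automorphism group the inertia acts nontrivially on cohomology, so local liftability on the coarse quotient requires an argument.
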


Note that Theorem \ref{thm:hypsurf_intro} says that the Shafarevich conjecture for smooth hypersurfaces over number fields implies the analogous conjecture for smooth hypersurfaces over all finitely generated fields of characteristic zero; we refer the reader to \cite{JL, JLFano, JLM} for related  results on this conjecture.

 We now briefly discuss our main abstract result (Theorem \ref{thm:main-theorem-intro}); we will discuss the finiteness and non-Zariski-density statements separately, as the proofs are somewhat different. 
 
  \subsection{Finiteness results}
 
 For convenience, we rephrase the part of Theorem \ref{thm:main-theorem-intro} about finiteness in terms of the notion of arithmetic hyperbolicity. Lang introduced the notion of arithmetic hyperbolicity over $\Qbar$ (sometimes also referred to as \emph{Mordellicity}) to  appropriately formalize the property of ``having only finitely many rational points''.  

\begin{definition}[Arithmetic hyperbolicity]\label{defn:arhyp}  Let $k$ be an algebraically closed field of characteristic zero. A finite type separated scheme 
  $X$  over $k$ is \emph{arithmetically hyperbolic over $k$} 
if  there is a  $\ZZ$-finitely generated subring $A\subset k$, a finite type separated $A$-scheme $\mathcal{X}$ and an isomorphism  of schemes $\mathcal{X}_k \cong X$ over $k$ such that, for all $\ZZ$-finitely generated subrings $ A'\subset k$ containing $A$,  the set $\mathcal{X}(A')$  of $A'$-points  on $\mathcal{X}$ is finite.
\end{definition} 

 For example, by Faltings's finiteness theorem \cite{FaltingsComplements}, a smooth quasi-projective connected curve $X$ over $k$ is arithmetically hyperbolic over $k$  if and only if $X$ is not isomorphic to $\mathbb{P}^1_k, \mathbb{A}^1_k, \mathbb{A}^1_k\setminus \{0\}$, nor a smooth proper connected genus one curve over $k$. Faltings also proved that a closed subvariety $X$ of an abelian variety  $A$ over $k$ is arithmetically hyperbolic over $k$ if and only if $X$ does not contain the translate of a positive-dimensional abelian subvariety of $A$; see \cite{FaltingsLang}.  %The notion of arithmetic hyperbolicity is further studied in \cite{Autissier1, Autissier2, JLalg, JAut, VojtaLangExc}, and also      \cite{BauerStoll,  CLZ, Dimitrov,   Faltings2, FaltingsComplements,  Levin, Moriwaki, UllmoShimura, Vojta1, Vojta1a, Vojta2b,  Vojta3, Vojta87, VojtaSubb}.   \\
  
A period domain (usually denoted by $D$) is a classifying space for polarized Hodge structures of some fixed type.  
\begin{definition}\label{defn:periodmap}
 We say that a variety  $X$   over $k$ \emph{admits a quasi-finite complex-analytic period map (up to Galois conjugation)} if there  exists a subfield $k_0\subset k$, an embedding $k_0\to \CC$, a  variety $X_{0}$ over $k_0$, an isomorphism of $k$-schemes $X_{0,k}\cong X$, a period domain $D$, a discrete arithmetic subgroup $\Gamma$ of $\mathrm{Aut}(D)$, and a horizontal locally liftable holomorphic map $X_{0,\CC}^{\an}\to\Gamma\backslash D $ with finite fibres.
 
 \end{definition} 
  We will follow \cite{Schmid} and recall some basics of the theory in Section \ref{section:hodge_theory}.

The part of Theorem \ref{thm:main-theorem-intro} about finiteness may be rephrased (and slightly generalized) as:
\begin{theorem}[Main Result, IV] \label{thm:intro} Let $k\subset L$ be an extension of algebraically closed fields of characteristic zero.
Let $X$ be a   variety over $k$ such that $X$ admits a quasi-finite  complex-analytic period map. If $X$ is arithmetically hyperbolic  over $k$, then $X_L$ is arithmetically hyperbolic over $L$.
\end{theorem}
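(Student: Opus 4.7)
The direction ``$X_L$ arithmetically hyperbolic over $L$ implies $X$ arithmetically hyperbolic over $k$'' is essentially formal: a model $\mathcal{X}/A$ with $A\subset k$ finitely generated also serves as a model over $A\subset L$, and for any $A\subset A'\subset k$ finitely generated one has $\mathcal{X}(A')\hookrightarrow \mathcal{X}(A'\cdot B_0)$ for any finitely generated $B_0\subset L$ witnessing arithmetic hyperbolicity of $X_L$; the right-hand side is finite, so the left-hand side is too.

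The substantive direction is $X$ arithmetically hyperbolic over $k$ implies $X_L$ arithmetically hyperbolic over $L$ (this is non-trivial because $L\supset k$ admits more $L$-points than $k$-points a priori). Any extension $L/k$ of algebraically closed fields of characteristic zero is a filtered colimit of sub-extensions of the form $\overline{k(t)}/k$, and arithmetic hyperbolicity tests only against finitely generated subrings, so one reduces to $L=\overline{k(t)}$. Assume for contradiction that $X_L$ is not arithmetically hyperbolic: for any model $\mathcal{X}/A$ of $X$ with $A\subset k$ finitely generated, there is a finitely generated $A$-subalgebra $B\subset L$ with $\mathcal{X}(B)$ infinite. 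After inverting finitely many elements of $A$ we may take $B$ flat over $A$. Set $C:=\Spec(B\otimes_A k)$; the arithmetic hyperbolicity of $X$ forces $B\not\subset k$ (else $B$ would itself be a finitely generated subring of $k$ over which $\mathcal{X}$ has finitely many points), so $C$ is a positive-dimensional affine $k$-variety, and each element of $\mathcal{X}(B)$ induces a morphism $f_i\colon C\to X$ of $k$-schemes, with the $f_i$ pairwise distinct by flatness.

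The quasi-finite complex analytic period map $\phi\colon X_\CC^{\an}\to\Gamma\backslash D$ supplied by the hypothesis pulls each $f_i$ back to a polarized variation of Hodge structure on $C_\CC^{\an}$. Quasi-finiteness of $\phi$ ensures that at most finitely many $f_i$ can share a given composition $\phi\circ f_i$ (two such $f_i,f_j$ yield a section $C\to X\times_{\Gamma\backslash D}X$ of a quasi-finite map to $X$, and sections on the connected base $C$ are discretely parametrized). Hence infinitely many $f_i$ produce infinitely many pairwise distinct polarized variations of Hodge structure of a fixed discrete type on $C_\CC^{\an}$, contradicting the finiteness theorem for such variations: a classical result of Deligne in the smooth projective case, extended to the quasi-projective setting via Griffiths--Schmid's asymptotic analysis and the algebraicity/tameness theorems of Bakker--Brunebarbe--Tsimerman.

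The main obstacle is the Hodge-theoretic input of the last paragraph: producing a usable finiteness statement for polarized variations of Hodge structure on the possibly non-proper base $C$, and verifying that quasi-finiteness of $\phi$ genuinely separates the $f_i$ after composition with $\phi$. The spreading-out and specialization set-up is routine, but the extension of VHS finiteness from the smooth projective to the quasi-projective setting is exactly where the authors' use of Bakker--Brunebarbe--Tsimerman becomes essential.
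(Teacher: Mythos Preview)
Your spreading-out set-up and the reduction to a transcendence-degree-one extension are fine, and the fiber-finiteness over a fixed composition $\phi\circ f_i$ is essentially correct. The genuine gap is the final step: there is \emph{no} finiteness theorem for polarized variations of Hodge structure of a fixed discrete type on a curve $C$. Indeed, Remark~\ref{remark:ds} records Faltings's example of a curve $C$ admitting infinitely many non-constant maps to $\mathcal{A}_g$ (with level), hence infinitely many pairwise distinct weight-one VHS on $C$. Deligne's theorem only bounds the number of monodromy representations; for a fixed monodromy, the Rigidity Theorem (Proposition~\ref{rigidity-theorem}) says a period map is determined by its value at \emph{one point}, so the unpointed period maps with that monodromy can still move in a family. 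Your argument, as written, never fixes a basepoint, so the contradiction does not materialize.

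The paper closes this gap by introducing a basepoint and using the hypothesis that $X$ is arithmetically hyperbolic over $k$ a second time. Concretely (this is packaged as geometric hyperbolicity, Theorem~\ref{thm:deligne}, combined with Corollary~\ref{cor:geometricity_arhyp}): after shrinking, choose a section $\sigma\colon \Spec A\to \Spec B$, giving $c\in C(k)$; evaluation $f_i\mapsto f_i(c)$ lands in $\mathcal{X}(A)$, which is finite by arithmetic hyperbolicity over $k$; then for each of the finitely many values $x=f_i(c)$, the set $\Hom_k((C,c),(X,x))$ is finite by Deligne's monodromy finiteness plus rigidity. This is exactly the ``pointed'' finiteness that your outline omits.

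A secondary point: you attribute the Hodge-theoretic input to Bakker--Brunebarbe--Tsimerman, but their result is \emph{not} used in the proof of Theorem~\ref{thm:intro}. It enters only in Theorem~\ref{thm:arakelov} (weak boundedness via ampleness of the Hodge line bundle), which feeds into the non-density half of Theorem~\ref{thm:main-theorem-intro}. For the finiteness statement here, Deligne's monodromy finiteness \cite{Delignemonodromy} and the Rigidity Theorem \cite[7.24]{Schmid} already hold on quasi-projective bases and suffice.
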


We note that Lang-Vojta's conjecture on integral points of varieties (see  \cite[Conj.~4.3]{Vojta3}) implies  that a variety $X$ over $\Qbar$ which admits a quasi-finite complex-analytic period map is in fact arithmetically hyperbolic over $\Qbar$, as all its subvarieties are of log-general type by a theorem of Kang Zuo \cite{ZuoNeg}.

Theorem \ref{thm:intro} can be applied to curves of genus at least two, as such curves admit a quasi-finite period map up to a finite \'etale cover \cite{MartinKodaira}. However, in this case, Faltings already remarked that the statement follows from  Grauert-Manin's finiteness theorem (\emph{formerly} the function field analogue of Mordell's conjecture); see \cite[\S VI.4,~p.215]{FaltingsComplements}. Similarly, if $g>0$ is an integer and $X$ is the moduli space of principally polarized abelian varieties of dimension $g$ over $\Qbar$ with level $3$ structure, then Faltings   showed that $X_k$ is arithmetically hyperbolic over $k$ by ``re-doing'' part of his proof that $X$ is arithmetically hyperbolic over $\Qbar$; the fact that $X$ is arithmetically hyperbolic over $\Qbar$ is precisely Shafarevich's arithmetic finiteness conjecture for principally polarized abelian schemes over   $\ZZ$-finitely generated subrings of $\Qbar$.  Around the same time, in Szpiro's seminar \cite{Szpiroa}, Martin-Deschamps gave a \emph{different} proof of the arithmetic hyperbolicity of $X_k$ by using a specialization argument on the moduli stack of principally polarized abelian schemes; see \cite{Martin}. We stress that our proof of Theorem \ref{thm:intro} is very close to Martin-Deschamps's line of reasoning. Indeed, Martin-Deschamps' proof crucially relies on   Faltings's function field analogue of the Shafarevich conjecture for abelian varieties \cite{FaltingsArakelov} and   Grothendieck's theorem on monodromy representations of abelian schemes \cite{GrothendieckHom}.     In our proof of Theorem \ref{thm:intro},  we replace these results of Faltings and Grothendieck  by foundational  results of Deligne, Griffiths, and Schmid in Hodge theory.

In fact, our proof of   Theorem \ref{thm:intro}  relies on the following consequence of Deligne's finiteness theorem for monodromy representations  \cite{Delignemonodromy} and  the ``Rigidity Theorem'' in Hodge theory (see  Theorem \ref{rigidity-theorem}).
\begin{theorem} [Deligne + Rigidity Theorem]\label{thm:deligne_intro}
Let $X$ be a variety   over $k$ which admits a quasi-finite period map (up to Galois conjugation). Then, for every variety $Y$ over $k$, every $y$ in $Y(k)$, and every $x$ in $X(k)$, the set of morphisms $f:Y\to X$ with $f(y)=x$ is finite.
\end{theorem}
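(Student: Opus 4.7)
The plan is to use the period map $\phi\colon X^{\an} \to \Gamma\backslash D$ to translate finiteness of $f\colon Y \to X$ with $f(y)=x$ into finiteness of pointed horizontal maps $Y^{\an} \to \Gamma\backslash D$, and then to apply Deligne's finiteness theorem for monodromy representations together with the Theorem of the Fixed Part. I would first reduce to $k = \CC$ by choosing a model realizing the hypothesized period map and spreading $Y$, $X$, and any hypothetical $f$ out to a common model over a subfield of $\CC$. I may also assume without loss of generality that $Y$ is connected.

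The argument then splits into two stages. In the first stage, I factor $f \mapsto \phi \circ f^{\an}$ and control its fibres. Given a horizontal, locally liftable $\psi\colon (Y^{\an}, y) \to (\Gamma\backslash D, \phi(x))$, the set of $f\colon Y \to X$ with $\phi \circ f^{\an} = \psi$ and $f(y) = x$ is in bijection with sections of the quasi-finite map $\pi\colon Y^{\an} \times_{\Gamma\backslash D} X^{\an} \to Y^{\an}$ passing through $(y,x)$. Since $\pi$ is quasi-finite, any such section is a closed analytic subspace of the same dimension as $Y^{\an}$, hence an irreducible component of the fibre product through $(y,x)$; the algebraic finite-type nature of $X$ ensures that only finitely many such components exist, so the fibres of $f \mapsto \phi \circ f^{\an}$ are finite.

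In the second stage, I show finiteness of such $\psi$. Each $\psi$ corresponds to a polarizable variation of Hodge structure $(V_\psi, F_\psi^{\bullet})$ on $Y^{\an}$, of the same type as the one on $X$, together with an identification of its fibre at $y$ with the fixed Hodge structure at $\phi(x)$. Deligne's finiteness theorem for monodromy representations of polarizable VHS of fixed type gives that only finitely many monodromy representations $\rho_\psi\colon \pi_1(Y^{\an}, y) \to \Gamma$ arise once the marking at $y$ is fixed. For each such $\rho_\psi$, the Theorem of the Fixed Part of Deligne and Schmid implies that a polarizable VHS with that monodromy is determined by the Hodge structure on its fibre at $y$: applied to the internal hom of two candidate VHS, the theorem shows that a flat identification is a Hodge isomorphism globally if and only if it is so at one point. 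Together, these give finitely many $\psi$, and combined with the first stage, finitely many $f$.

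The main obstacle I foresee is the bookkeeping of basepoints: Deligne's finiteness is naturally phrased up to conjugation in $\mathrm{GL}_n(\ZZ)$, so one must argue that the marking of the fibre at $y$ rigidifies the representation to get finiteness on the nose, and that the rigidity consequence of the Theorem of the Fixed Part applies in the required generality (comparing two polarizable VHS with marked fibres, rather than only pullbacks of the VHS on $X$). A secondary technical subtlety is justifying the finiteness of components of the analytic fibre product in the first stage, which rests on the algebraic finite-type structure of $X$.
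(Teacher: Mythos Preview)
Your approach is correct and rests on the same two Hodge-theoretic inputs as the paper's: Deligne's finiteness theorem for monodromy representations underlying a polarized VHS, and the rigidity theorem (packaged in the paper as Proposition~\ref{rigidity-theorem}, which is indeed the consequence of the Theorem of the Fixed Part you describe). The organization, however, differs. Rather than spreading out directly to $\CC$ and keeping $Y$ arbitrary, the paper first reduces from a general pointed variety $(Y,y)$ to a smooth pointed curve $(C,c)$ via an uncountability/specialization argument (Lemma~\ref{lem:geomhyp_persists} together with Remark~\ref{remark:geomhyp}), and only then runs the two-stage argument over $\CC$ for curves (Theorem~\ref{thm:deligne}).

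The paper's detour through curves lets it assert your Stage~1 (``the map $f\mapsto \phi\circ f^{\an}$ has finite fibres because $\phi$ is quasi-finite'') as a one-liner. Your direct route avoids the uncountability machinery but obliges you to justify Stage~1 for general $Y$; the sketch you give works once you also take $Y$ irreducible: the graphs $\Gamma_f\subset W=Y^{\an}\times_{\Gamma\backslash D}X^{\an}$ are irreducible of dimension $\dim Y=\dim W$, hence global irreducible components of $W$ passing through $(y,x)$, and a complex-analytic germ has only finitely many local irreducible components, each contained in a unique global one. Your basepoint worry in Stage~2 is exactly the content of Proposition~\ref{rigidity-theorem}, which the paper invokes as a black box rather than unpacking via internal $\Hom$. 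One small point on your reduction to $\CC$: to descend an infinite collection of hypothetical $f_i$ simultaneously you need a countably (not finitely) generated subfield of $k$ extending the field of definition of the period map, which can then be embedded in $\CC$ since $\CC$ has uncountable transcendence degree.
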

 
 Note that Theorem \ref{thm:deligne_intro} is a finiteness statement about maps of pointed varieties to a period domain. It is crucial that we consider \emph{pointed} maps here; see Remark \ref{remark:ds} for a discussion of this.
 
Besides  applying our   results for varieties with a quasi-finite period to the moduli space of smooth hypersurfaces, we also give further applications   to locally symmetric varieties and  Shimura varieties.    
 
\subsection{Non-density results} It is also natural to study the non-Zariski-density  (as opposed to the finiteness) of integral points on certain moduli spaces.  Our main novel result on non-density   is Theorem \ref{thm:criterion_nd}.

We can apply Theorem \ref{thm:criterion_nd} to give conditional results on the non-density of integral points on the Hilbert scheme $\mathrm{Hilb}_{d,n}$ of smooth hypersurfaces of degree $d$ in $\mathbb{P}^{n+1}$ over $\ZZ$.

By Vojta's extension of Lang's conjecture on non-density of integral points \cite[Conj.~4.3]{Vojta3}, the finiteness of integral points on a variety is conjecturally closely related to its subvarieties being of log-general type.   However, as the Hilbert scheme of smooth hypersurfaces has subvarieties which are  not of log-general type, it is not reasonable to expect finiteness of integral points on this moduli space (and it is not hard to see that   $\mathrm{Hilb}_{d,n}$   has infinitely many $\ZZ[1/d]$-points). Nonetheless,  it follows from \cite{Javan3} that there is a finite \'etale cover $H'\to \mathrm{Hilb}_{d,n}$ such that $H'$ dominates  a positive-dimensional variety of log-general type. Therefore,  the integral points of $\mathrm{Hilb}_{d,n}$ should not be dense (even though they can be infinite).  

 The expectation that the integral points on $\mathrm{Hilb}_{d,n}$ should not be dense was investigated by  Lawrence-Venkatesh \cite{LV} for large enough $d$ and $n$. The current state-of-the-art   can be stated as follows (see \cite[Proposition~10.2]{LV}). (Note that the following statement provides a non-density statement \emph{only} for $\ZZ[1/S]$-points.)

\begin{theorem}[Lawrence-Venkatesh]\label{thm:lawrence}    There is an integer $n_0$ and a function $D_0(n)$   such that, for every $n\geq n_0$,    every $d\geq D_0(n)$, and every positive integer $S\geq 1$, the   set $\mathrm{Hilb}_{d,n}(\ZZ[1/S])$ is not dense in $\mathrm{Hilb}_{d,n, \Qbar}$.
\end{theorem}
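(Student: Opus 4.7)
The plan is to follow the $p$-adic method of Lawrence--Venkatesh.  Let $\pi \colon \mathcal{X} \to \mathrm{Hilb}_{d,n}$ denote the universal family of smooth degree $d$ hypersurfaces in $\PP^{n+1}$, defined over $\ZZ[1/d]$.  To each $x \in \mathrm{Hilb}_{d,n}(\ZZ[1/S])$ one attaches the $\ell$-adic Galois representation
$$\rho_x \colon \Gal(\Qbar/\QQ) \longrightarrow \mathrm{GL}\bigl(\HH^n_{\mathrm{prim}}(\mathcal{X}_{\bar x}, \QQ_\ell)\bigr).$$
These representations are unramified outside a finite set of primes depending only on $S$, $d$, and $\ell$, have Hodge--Tate weights contained in $\{0, 1, \dots, n\}$, and are pure of weight $n$.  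A Faltings-type finiteness theorem for such compatible systems (bounds on Frobenius traces from the Weil conjectures together with Chebotarev density) then shows that the set of semisimplifications $\rho_x^{\mathrm{ss}}$, as $x$ ranges over $\mathrm{Hilb}_{d,n}(\ZZ[1/S])$, is finite.

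Fix a prime $p \notin S$ of good reduction and a residue disk $U \subset \mathrm{Hilb}_{d,n}(\ZZ_p)$.  The convergent $F$-isocrystal attached to the family admits a Frobenius-equivariant trivialization on $U$, so the varying Hodge filtration defines a $p$-adic period map $\Phi_U \colon U \to \Fl$ into the appropriate flag variety on the (now constant) crystalline cohomology.  If $x, x' \in U$ have isomorphic $p$-adic restrictions of $\rho_x, \rho_{x'}$, then their Hodge filtrations lie in the same orbit of the centralizer $Z(\varphi)$ of Frobenius.  Combining this with the finiteness of the previous paragraph, the set $\mathrm{Hilb}_{d,n}(\ZZ[1/S]) \cap U$ is contained in a finite union of fibres of the composition $U \xrightarrow{\Phi_U} \Fl \to \Fl / Z(\varphi)$.

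The core Hodge-theoretic input is a dimension inequality of the form
$$\dim \Phi_U(U) \; > \; \dim \bigl( Z(\varphi) \cdot \Phi_U(x) \bigr)$$
at every relevant $x$; this forces each fibre above to be a proper rigid-analytic subset of $U$, and a Noetherian/rigid-analytic covering argument upgrades this to non-density of $\mathrm{Hilb}_{d,n}(\ZZ[1/S])$ in $\mathrm{Hilb}_{d, n, \Qbar}$.  By Griffiths transversality, the inequality reduces to an infinitesimal comparison of the derivative of the complex period map---essentially the multiplication map $H^1(T_X) \otimes H^{p,q} \to H^{p-1,q+1}$---with the Lie algebra of $Z(\varphi)$.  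The Jacobian-ring computations of Carlson--Green--Griffiths--Harris imply that for $d$ large relative to $n$ this multiplication map has rank much larger than the dimension of the Lie algebra of any proper algebraic subgroup arising as the Mumford--Tate group of a hypersurface, yielding the required strict inequality.

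The main obstacle is exactly this dimension inequality.  One must bound $\dim Z(\varphi)$ uniformly across the finite family of Galois representations produced by the Faltings step, which in turn requires bounding from above the Mumford--Tate groups that can occur at integral points; at the same time one needs a lower bound on the rank of the derivative of the period map that grows fast enough with $d$.  Reconciling these two bounds to produce explicit constants $n_0$ and $D_0(n)$---and verifying the semisimplicity and crystallinity assumptions on $\rho_x$ required throughout---constitutes the technical heart of \cite[\S 10]{LV}.
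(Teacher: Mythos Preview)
The paper does not prove this theorem at all: it is stated as a result of Lawrence--Venkatesh and simply cited as \cite[Proposition~10.2]{LV}, with no argument given. So there is no ``paper's own proof'' to compare against.

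Your proposal is a reasonable high-level outline of the Lawrence--Venkatesh method itself, and in that sense it is the correct approach. However, as written it is a sketch of a plan rather than a proof: the Faltings-type finiteness step, the dimension inequality $\dim \Phi_U(U) > \dim(Z(\varphi)\cdot \Phi_U(x))$, and especially the ``big monodromy'' input (bounding possible Mumford--Tate groups and comparing to the rank of the derivative of the period map) are each substantial pieces of work that you have named but not carried out. You also acknowledge this in your last paragraph. If the goal were to reproduce the paper's treatment, the appropriate thing to do is exactly what the paper does: cite \cite{LV} and move on. If the goal is to actually prove the theorem, your outline identifies the right ingredients, but the hard content---producing the explicit $n_0$ and $D_0(n)$ from the Jacobian-ring computations and the centralizer bounds---remains to be supplied.
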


Motivated by Lawrence-Venkatesh's recent breakthrough,  we  show that the non-density of integral points on the Hilbert scheme $\mathrm{Hilb}_{d,n}$ valued in a number field persists to non-density over finitely generated fields.

\begin{theorem}[Main Result, V] \label{thm:non_density_hyps} 
Let $d\geq 3$ be an integer and let $n\geq 2$ be an integer.
Suppose that, for every number field $K$ and every finite set of finite places $S$ of $K$, the set    $\mathrm{Hilb}_{d,n}(\OO_{K,S})$ is not dense in $\mathrm{Hilb}_{d,n}$. Then,   for every $\mathbb{Z}$-finitely generated regular integral domain of characteristic zero $A$, we have that $\mathrm{Hilb}_{d,n}(A)$ is not dense in $\mathrm{Hilb}_{d,n}$.
\end{theorem}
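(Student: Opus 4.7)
The plan is to apply Theorem~\ref{thm:criterion_nd} to a moduli scheme of smooth hypersurfaces admitting a quasi-finite period map, and then transfer the non-density conclusion back to the Hilbert scheme via the $\PGL_{n+2}$-action. By Torelli-type results for smooth hypersurfaces of degree $d \geq 3$ in $\PP^{n+1}$ with $n \geq 2$, the moduli stack $\mathcal{M}_{d,n}$ admits a quasi-finite complex-analytic period map to a suitable quotient of a polarized period domain. After imposing a level-$\ell$ structure on the primitive middle \'etale cohomology for some auxiliary prime $\ell \geq 3$, one obtains a quasi-projective scheme $M$ which is finite \'etale over $\mathcal{M}_{d,n}$, is a fine moduli space, and still admits a quasi-finite complex-analytic period map.

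Next, I would verify that the hypothesis on $\mathrm{Hilb}_{d,n}$ transfers to $M$: given an $\OO_{K,S}$-point of $M$, the corresponding smooth hypersurface $X \to \Spec \OO_{K,S}$ becomes embedded in $\PP^{n+1}$ after trivializing the $\PGL_{n+2}$-torsor of bases of $\HH^0(X, \OO_X(1))$, which can be achieved after a finite \'etale base change to some $\OO_{K',S'}$. Thus density of $M(\OO_{K,S})$ in $M$ would force density of $\mathrm{Hilb}_{d,n}(\OO_{K',S'})$ in $\mathrm{Hilb}_{d,n}$, contradicting the hypothesis. So $M(\OO_{K,S})$ is non-Zariski-dense for every such $K, S$. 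Theorem~\ref{thm:criterion_nd} applied to $M$ then yields that $M(B)$ is non-Zariski-dense in $M$ for every $\ZZ$-finitely generated integral domain $B$ of characteristic zero.

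Finally, suppose for contradiction that $\mathrm{Hilb}_{d,n}(A)$ is Zariski-dense in $\mathrm{Hilb}_{d,n}$ for some $\ZZ$-finitely generated regular integral domain $A$ of characteristic zero. Each $A$-point of $\mathrm{Hilb}_{d,n}$ determines an $A$-point of $\mathcal{M}_{d,n}$, and after a finite \'etale base change $A \to A'$ trivializing the level cover $M \to \mathcal{M}_{d,n}$, one obtains $A'$-points of $M$; their image in $M$ is Zariski-dense, contradicting the previous step.

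The main obstacle is the careful interplay between the Hilbert scheme and a fine moduli scheme via $\PGL_{n+2}$-torsors and finite \'etale level covers: Theorem~\ref{thm:criterion_nd} applies only to varieties admitting a quasi-finite period map, which $\mathrm{Hilb}_{d,n}$ itself is not, so one must pass back and forth through $M$ while tracking that density and non-density are each preserved modulo finite \'etale base change. A secondary subtlety is ensuring that the level-$\ell$ rigidified scheme $M$ can be arranged to inherit a quasi-finite period map from $\mathcal{M}_{d,n}$; this should follow formally from $M\to\mathcal{M}_{d,n}$ being finite \'etale and the definition of admitting a quasi-finite period map recalled before Theorem~\ref{thm:intro}.
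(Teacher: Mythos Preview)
Your approach is essentially the paper's: pass from $\mathrm{Hilb}_{d,n}$ to a finite \'etale scheme atlas of the moduli stack that admits a quasi-finite period map, apply Theorem~\ref{thm:criterion_nd} there (via Proposition~\ref{prop:qf_per}), and then descend the non-density back to $\mathrm{Hilb}_{d,n}$. The paper takes the atlas $U$ from \cite{Javan3} rather than your level-$\ell$ rigidification, but these play the same role; it also forms the fibre product $H':=\mathrm{Hilb}_{d,n}\times_{\mathcal{C}_{d;n}}U$ and runs the final descent along the finite \'etale map $H'\to \mathrm{Hilb}_{d,n}$, which packages your last paragraph more cleanly.

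Two places in your sketch deserve tightening. First, the phrase ``after a finite \'etale base change $A\to A'$ trivializing the level cover $M\to \mathcal{M}_{d,n}$'' is not literally correct: the pullback of $M$ along different $A$-points of $\mathcal{M}_{d,n}$ gives different finite \'etale covers of $\Spec A$, so no single $A'$ works. What you need is Hermite's theorem over finitely generated rings (finitely many \'etale covers of $\Spec A$ of bounded degree), then pigeonhole to find one $A'$ carrying a dense set of lifts; this is exactly the content of the paper's Lemma~\ref{lem:cw}. Second, your appeal to ``Torelli-type results'' for the quasi-finite period map glosses over the exceptional case $(d,n)=(3,2)$, where infinitesimal Torelli fails; the paper handles this via the standard cyclic-covering trick (see Section~\ref{section:hyps}). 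Neither point is a genuine gap, but both should be made explicit.
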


 The proof of Theorem \ref{thm:non_density_hyps} is more involved than the proof of Theorem \ref{thm:hypsurf_intro}. For example, due to the fact that the Hilbert scheme does not admit any period map with finite fibres, we are forced to argue on  the stack $[\mathrm{PGL}_{n+2}\backslash \mathrm{Hilb}_{d,n}]$ and to relate the non-density of the integral points on the stack to that on the Hilbert scheme using finiteness results for $\mathrm{PGL}_{n+2}$-torsors over number rings.  
 
\begin{ack} We thank Kenneth Ascher, Raymond van Bommel, Daniel Loughran, Siddharth Mathur, and Kang Zuo for helpful discussions.
The first named author gratefully acknowledges support from SFB/Transregio 45. The second named author gratefully acknowledges support from the Institute for Advanced Study.
\end{ack}

\begin{con} We let $k$ be an algebraically closed field of characteristic zero. 
A variety over $k$ is a finite type separated scheme over $k$.
 If $X$ is a variety over $k$ and $A\subset k$ is a subring, then a model for $X$ over $A$ is a pair $(\mathcal{X}, \phi)$ with $\mathcal{X}$ a finite type separated  scheme over $A$ and $\phi:\mathcal{X}_k\to X$ an isomorphism of schemes over $k$. We will usually omit $\phi$ from our notation and simply refer to $\mathcal{X}$ as a model for $X$ over $A$.
 
  If $K$ is a number field and $S$ is a finite set of finite places of $K$, we let $\OO_{K,S}$ be the ring of $S$-integers of $K$. 
  
  If $k\subset L$ is a field extension and $X$ is a variety over $k$, we denote  $X\times_{ k} \Spec L$ by $X_L$.
\end{con}

\section{Arithmetic hyperbolicity and geometric hyperbolicity}

  To prove Theorems \ref{thm:hypsurf_intro} and \ref{thm:intro} on the arithmetic hyperbolicity of certain varieties (as defined in Definition \ref{defn:arhyp}), we will use a geometric criterion for the persistence of  arithmetic hyperbolicity of a variety  along field extensions  proven in \cite{JAut}. To state this criterion, we introduce the notion of geometric hyperbolicity. We view this property as a ``function field'' analogue of arithmetic hyperbolicity.

 \begin{definition}\label{def:geom_hyp}
 A variety $X$  over $k$ is \emph{geometrically hyperbolic over $k$} if, for every smooth integral curve $C$ over $k$, every $c$ in $C(k)$, and every $x$ in $X(k)$, the set $\Hom_k((C,c),(X,x))$ of morphisms of $k$-schemes $f:C\to X$ with $f(c)=x$ is finite.
 \end{definition} 
 
 More generally, a finite type separated Deligne-Mumford algebraic stack $X$ over $k$ is \emph{geometrically hyperbolic} if, for every smooth integral curve $C$ over $k$, every $c$ in $C(k)$, and every $x$ in (the groupoid) $X(k)$, the set $\Hom_k((C,c),(X,x))$ of isomorphism classes of morphisms $f:C\to X$ with $f(c)= x$ is finite. (Here $f(c) = x$ means that $f(c)$ and $x$ are isomorphic in $X(k)$.)
  
  \begin{example}[Urata's theorem]\label{example:urata}
A proper variety $X$ over $\CC$ which is Brody hyperbolic (i.e., has no entire curves) is geometrically hyperbolic. Indeed, as $X^{\an}$ is a compact complex-analytic space with no entire curves, it follows from Brody's theorem that $X^{\an}$ is Kobayashi hyperbolic (as defined in \cite{KobayashiBook}). Therefore, as $X^{\an}$ is a compact   Kobayashi hyperbolic complex-analytic space, we conclude that $X$ is geometrically hyperbolic from Urata's theorem   \cite[Theorem~5.3.10]{KobayashiBook} (or the original \cite{Urata}). (Note that Urata's theorem has been extended to the logarithmic case in \cite{JLev}.)
  \end{example}
  
  \begin{example}  \label{exa:cp}
  Let $\mathcal{M}$ be the locally finite type separated Deligne-Mumford algebraic stack of smooth proper canonically polarized varieties over $\mathbb{Q}$. That is, for a scheme $S$ over $\mathbb{Q}$, the objects of $\mathcal{M}(S)$ are smooth proper morphisms $\mathcal{X}\to S$ whose geometric fibres are connected and have ample canonical bundle. (For example, for every $g\geq 2$, the stack of smooth proper genus $g$ curves $\mathcal{M}_g$ is an open and closed substack of $\mathcal{M}$. In fact, $\mathcal{M}$ is the disjoint union of the stacks $\mathcal{M}_h$, where $h$ runs over all polynomials in $\mathbb{Q}[t]$ and  $\mathcal{M}_h$ is the substack of smooth proper canonically polarized varieties with Hilbert polynomial $h$.)  Let $X$ be a quasi-projective scheme over $\mathbb{C}$ such that there exists a quasi-finite morphism $X\to \mathcal{M}_{\CC}$. (In other words, there is a smooth proper morphism $f:Y\to X$ whose geometric fibres are canonically polarized varieties such that, for every $x$ in $X(\CC)$, the set of $y$ in $Y(\CC)$ with $Y_x\cong Y_y$ is finite. In particular, the family $f:Y\to X$ of canonically polarized varieties has ``maximal variation in moduli''.) Then, it follows from Viehweg-Zuo's theorem (see \cite{VZ})   that $X$ is Brody hyperbolic. In particular, if $X$ is projective, then  Urata's theorem (Example \ref{example:urata}) implies that $X$ is geometrically hyperbolic over $\CC$. (It seems reasonable to suspect that the assumption that $X$ is projective is unnecessary; see \cite{JSZ} for recent progress.)
  \end{example}
  
 We will prove the geometric hyperbolicity of some (not necessarily proper) varieties over $\CC$  by appealing to their complex-analytic properties. The following lemma will then be applied to deduce the geometric hyperbolicity of these varieties over every algebraically closed field of characteristic zero.
  
  \begin{lemma}\label{lem:geomhyp_persists} Let $k\subset L$ be an extension of algebraically closed fields of characteristic zero.
  If $k$ is uncountable and $X$ is a finite type separated geometrically hyperbolic Deligne-Mumford algebraic stack over $k$, then $X_L$ is geometrically hyperbolic over $L$.
  \end{lemma}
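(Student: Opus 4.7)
The plan is to argue by contradiction. Suppose $X_L$ fails to be geometrically hyperbolic over $L$, so there exist a smooth integral curve $C$ over $L$, a point $c\in C(L)$, a point $x\in X_L(L)$, and an infinite set of pairwise non-isomorphic pointed morphisms $f_i\colon (C,c)\to (X_L,x)$. The first step is to choose a finitely generated $k$-subalgebra $A\subset L$ over which the triple $(C,c,x)$ is defined, giving a smooth family of integral curves $\mathcal{C}\to S:=\Spec A$, a section $\sigma$, and a point $\xi\in X_A(S)$ of the base change $X_A:=X\times_k S$. After shrinking $S$ I may assume that the geometric fibres of $\mathcal{C}\to S$ are smooth and integral.

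Next I would form the relative pointed Hom-stack $\mathcal{H}:=\underline{\Hom}_S((\mathcal{C},\sigma),(X_A,\xi))$, which is a Deligne--Mumford stack locally of finite type over $S$, and hence over $k$. Each $f_i$ gives an $L$-point of $\mathcal{H}$ lying over the $L$-point $\Spec L\to S$ induced by $A\hookrightarrow L$, and this $L$-point factors through the generic point $\eta$ of $S$. Thus the generic fibre $\mathcal{H}_\eta$ carries infinitely many pairwise non-isomorphic $L$-valued points, forcing the dichotomy: either (a) $\mathcal{H}_\eta$ has an irreducible component of positive dimension, or (b) $\mathcal{H}_\eta$ has infinitely many zero-dimensional irreducible components.

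In case (a), the Zariski closure in $\mathcal{H}$ of such a component dominates $S$ with positive-dimensional generic fibre, so by generic flatness its fibres over a dense open $U\subset S$ remain positive-dimensional. For any $s\in U(k)$ the fibre $\mathcal{H}_s(k)$ is infinite, and since it parametrizes isomorphism classes of pointed morphisms $(\mathcal{C}_s,\sigma_s)\to (X,\xi_s)$, this contradicts the geometric hyperbolicity of $X$ over $k$. In case (b), I take the closures $\overline{H}_i\subset \mathcal{H}$ of the zero-dimensional generic-fibre components; these form infinitely many irreducible substacks each generically finite over $S$. Each $\overline{H}_i$ is finite surjective over a dense open $U_i\subset S$, and for $i\ne j$ the projection of $\overline{H}_i\cap \overline{H}_j$ to $S$ is a proper closed subset $Z_{ij}\subset S$.

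The closing ingredient is the ``Baire'' property of irreducible varieties over uncountable algebraically closed fields: no countable union of proper closed subvarieties of $S$ covers $S(k)$. Applying it to the countable collection $\{S\setminus U_i\}_i\cup \{Z_{ij}\}_{i\ne j}$, I may pick $s\in S(k)$ lying in every $U_i$ and outside every $Z_{ij}$, at which the fibres $(\overline{H}_i)_s$ are nonempty and pairwise disjoint. This produces infinitely many distinct $k$-points of $\mathcal{H}_s$, again contradicting geometric hyperbolicity of $X$ over $k$. The main obstacles I anticipate are setting up the relative pointed Hom-stack correctly as a locally-finite-type Deligne--Mumford stack (since $\mathcal{C}\to S$ need not be proper) and carefully stating the Baire-type lemma in the stacky/scheme-theoretic form needed; once these are in place, the dichotomy and specialization yield the contradiction.
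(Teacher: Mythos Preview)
Your overall strategy---spread the data out over a finite-type $k$-base $S$, then use the uncountability of $k$ to find a $k$-point of $S$ at which infinitely many of the morphisms survive and remain pairwise distinct---matches the paper's. The implementations diverge: the paper never forms a Hom-stack, but instead descends the $f_i$ one at a time to \'etale neighbourhoods $S_i\to S$, arranging inductively that their specializations stay pairwise non-isomorphic, and then uses only that a countable intersection of dense opens of $S$ meets $S(k)$. Your route through $\mathcal{H}=\underline{\Hom}_S((\mathcal{C},\sigma),(X_A,\xi))$ and the dichotomy (a positive-dimensional component versus infinitely many zero-dimensional ones) is more structural and would give a cleaner specialization step if $\mathcal{H}$ existed as stated.

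That is exactly the gap, and you have correctly flagged it: for a non-proper source $\mathcal{C}\to S$ the functor $\underline{\Hom}_S(\mathcal{C},X_A)$ is in general not representable by anything locally of finite type (already $\underline{\Hom}_k(\mathbb{A}^1_k,\mathbb{A}^1_k)$ is infinite-dimensional), and here $\mathcal{C}\to S$ is a family of smooth affine curves while the target is an arbitrary separated Deligne--Mumford stack with no given compactification. Compactifying the source does not help without simultaneously controlling which morphisms extend to the boundary, which forces you back to ad hoc arguments. The paper's morphism-by-morphism descent is precisely the device that bypasses this: it needs no moduli space at all, only standard spreading-out of a single morphism of finite-type objects over $L$ to a finitely generated $k$-subalgebra, applied countably many times.
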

  \begin{proof} Assume that $X_L$ is not geometrically hyperbolic over $L$. We show that $X$ is not geometrically hyperbolic over $k$. To do so, 
let $C$ be smooth affine connected curve over $L$, let $c\in C(L)$,  and let $x\in X(L)$ be such that the set $\Hom_L((C,c),(X_L,x))$ of (isomorphism classes of) morphisms $f:C\to X_L$ with $f(c) = x$  is infinite. Let $f_1, f_2, \ldots$ be a sequence of pairwise distinct (non-isomorphic) elements in $\Hom_L((C,c),(X_L,x))$. Let $S$ be an integral variety over $k$ and let $(\mathcal{C}, P)$ be a model  for $(C,c)$ over $S$. That is, the morphism $\mathcal{C}\to S$ is a smooth affine geometrically connected morphism of relative dimension one and $P\in \mathcal{C}(S)$ is a section such that there is a  (fixed) isomorphism  $\mathcal{C}_L \cong C$ and $P_L = c$. We now recursively descend every $f_i:C\to X$ to some ``\'etale neighbourhood'' of $S$ (using for instance \cite[Appendix~B]{Rydh2}). Thus, let $A_1\subset L$  be a finitely generated $k$-algebra with $S_1 =\Spec A_1$, let $S_1\to S$  be an \'etale morphism and let $F_1:\mathcal{C}_{S_1}\to X\times_k S_1$ be a morphism  with $F_1(P) =\{x\}\times S_1$ such  that the morphism $f_1:C\to X_L$ coincides with $F_{1,L}:C\cong \mathcal{C}_L\to X_L$.   Now, we construct integral affine varieties $S_2, S_3, \ldots$ over $k$ recursively, as follows. Assume $S_{i-1}$ has been constructed. Then, for every $i=2, 3, \ldots$, we choose a finitely generated $k$-algebra $A_i\subset L$ with $S_i =\Spec A_i$, an \'etale morphism $S_i\to S_{i-1}$  and a morphism $F_i\in \Hom_{S_i}((\mathcal{C}_i,P_i),(X\times_k S_i, x\times \{S_i\}))$ with $\mathcal{C}_i = \mathcal{C}_{S_i}$,  $P_i = P_{S_i}$, and  $F_{i,L} = f_i$   such that, for every  $1\leq j < i$, every $s$ in $S_{j}(k)$ and every $s'$ in $S_{i}(k)$ lying over $s$, the morphism $F_{i, s'}$ does not equal   $F_{j,s}$.  Let $Z_i$ be the (non-empty and open) image of $S_i\to S$. Since $k$ is uncountable and every $Z_i$ is a non-empty open of $S$, there is an $s$ in $S(k)$ contained in $\cap_{i=1}^\infty Z_i$. Now,  for every $i=1,2,\ldots$, let $s_i$ be a point of $S_i(k)$ lying over $s$ in $S(k)$. Define $D:=\mathcal{C}_{s}$ and note that $D\cong \mathcal{C}_{S_i,s_i}$. Moreover,  the morphisms  $F_{i,s_i}:D\cong \mathcal{C}_{S_i,s_i}  \to X\times \{s_1\}\cong X$  are, by construction,   pairwise distinct. Finally, as $F_{i,s_i}(P_{s_i}) = x$, we see that $X$ is not geometrically hyperbolic over $k$, as required. 
  \end{proof}
 
  \begin{remark}[From pointed curves to pointed varieties]\label{remark:geomhyp}
The notion of geometric hyperbolicity is studied in more generality in \cite{JXie} building on \cite{JKa} (see also \cite{vBJK, JBook}). For example, let $k$ be an \emph{uncountable} algebraically closed field of characteristic zero, and let $X$ be a finite type separated Deligne-Mumford algebraic stack over $k$. Then, for every reduced variety $Y$ over $k$, every $y$ in $Y(k)$, and every $x$ in $X(k)$, the set $\Hom_k((Y,y),(X,x))$ of morphisms $f:Y\to X$ with $f(y) = x$ is finite. Indeed, suppose that $f_1,f_2,\ldots$ are pairwise distinct morphisms from $Y$ to $X$ which map $y$ to $x$. Let $Y^{i,j}\subset Y$ be the closed subset    of points  $P$   such that $f_i(P) = f_j(P)$.   Let $w$ be a point of $Y(k)$ such that, for every $i\neq j$, the point $w$  does not lie in $Y^{i,j}$.  (Such a point exists as $k$ is uncountable and $Y^{i,j}\neq Y$ whenever $i\neq j$.) Let $C\subset Y$ be a smooth curve containing $w$ and $y$. Then the morphisms $f_1|_C, f_2|_C, \ldots$ are   pairwise distinct morphisms from $C$ to $X$ and send $y$ to $x$. This shows that $X$ is not geometrically hyperbolic, as required.
  \end{remark}

  \begin{remark}[Descending along coverings]
  \label{remark:cw_geomhyp} Let $X\to Y $ be a finite \'etale morphism of finite type separated Deligne-Mumford algebraic stacks over $k$. Then $X$ is geometrically hyperbolic over $k$ if and only if $Y$ is geometrically hyperbolic over $k$. This is proven when $X$ and $Y$ are projective schemes in \cite[\S 5]{JKa}, and the arguments in \emph{loc. cit.} easily adapt to prove the more general statement for stacks.
  \end{remark}
  
 The relation between geometric hyperbolicity and arithmetic hyperbolicity  is provided by the following consequence of the results proven in \cite[\S 4]{JAut}.

 \begin{proposition}\label{prop:gen0}
  Let $k\subset L$ be an extension of algebraically closed fields of characteristic zero, and let $X$ be an arithmetically hyperbolic variety over $k$ such that $X_L$ is geometrically hyperbolic over $L$. Then,  $X_L$ is arithmetically hyperbolic over $L$.
 \end{proposition}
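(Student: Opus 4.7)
The plan is to deduce this from Lemma \ref{lem:jaut}. Since $X$ is arithmetically hyperbolic over $k$ by hypothesis, it suffices to verify that for every algebraically closed subfield $k\subset K\subset L$, the variety $X\otimes_k K$ is mildly bounded over $K$. As every smooth quasi-projective curve is in particular a smooth integral curve, geometric hyperbolicity of $X\otimes_k K$ automatically implies mild boundedness (take $m=1$ and any single point $c_1\in C(K)$). Therefore the proposition reduces to the following descent statement: if $X\otimes_k L$ is geometrically hyperbolic over $L$, then $X\otimes_k K$ is geometrically hyperbolic over $K$ for every algebraically closed intermediate field $k\subset K\subset L$.

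To prove this descent, fix a smooth integral curve $C$ over $K$, a point $c\in C(K)$, and a point $x\in (X\otimes_k K)(K)$. Since $K$ is algebraically closed, the base change $C_L$ is again a smooth integral curve over $L$, $c_L\in C_L(L)$, and $x_L\in (X\otimes_k L)(L)$. The base-change functor from $K$-schemes to $L$-schemes induces a map
\[
\Hom_K((C,c),(X\otimes_k K,x))\longrightarrow \Hom_L((C_L,c_L),(X\otimes_k L,x_L)),\quad f\mapsto f_L.
\]
This map is injective: if $f_1,f_2\colon C\to X\otimes_k K$ have equal base change to $L$, then they agree on all $L$-points of $C_L$ and in particular on the (Zariski dense) image of $C(K)$, so $f_1=f_2$ by separatedness and reducedness of $C$. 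The target set is finite by the assumed geometric hyperbolicity of $X\otimes_k L$, hence so is the source. This establishes that $X\otimes_k K$ is geometrically hyperbolic over $K$.

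Combining these observations, for every algebraically closed subfield $k\subset K\subset L$ the variety $X\otimes_k K$ is mildly bounded over $K$. Lemma \ref{lem:jaut} then yields that $X\otimes_k L$ is arithmetically hyperbolic over $L$, as required. The only step requiring any genuine verification is the injectivity of the base-change map on pointed morphism sets, which is routine; the real content of the proposition lies in the already-established Lemma \ref{lem:jaut}, where the transition from mild boundedness plus arithmetic hyperbolicity at $k$ to arithmetic hyperbolicity at $L$ takes place.
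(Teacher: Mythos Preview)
Your proposal is correct and follows exactly the same approach as the paper: reduce to Lemma~\ref{lem:jaut} by observing that geometric hyperbolicity over $L$ descends to geometric hyperbolicity (hence mild boundedness) over every intermediate algebraically closed field $K$. The paper simply asserts this descent as ``clear'' without spelling out the injectivity of the base-change map on pointed Hom sets, whereas you provide the details; your argument for injectivity is slightly roundabout (faithfully flat descent or an equalizer argument would be cleaner than invoking density of $C(K)$), but it is correct.
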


\section{Weakly bounded varieties and persistence of non-density}\label{section:nd} 
To prove Theorem \ref{thm:hypsurf_intro} on the arithmetic hyperbolicity of the moduli of smooth hypersurfaces, we will use the geometric hyperbolicity of the moduli stack of smooth hypersurfaces. However, 
to prove Theorem \ref{thm:non_density_hyps} (which is concerned with the non-density of integral points on  a certain Hilbert scheme), we will require an additional property of the moduli space. Namely, we will need that it is ``weakly bounded''.
Here we follow the terminology of  Kov\'acs-Lieblich; see \cite{KovacsLieblich}. To be precise, we use the notion  of ``weak boundedness'' to give a criterion for extending results on non-density of integral points valued in number fields to non-density of integral points valued in finitely generated fields (see Theorem \ref{thm:criterion_nd}).
  
 \begin{definition}[Kov\'acs-Lieblich]\label{def:wb1} Let $\overline{X}$ be a   projective scheme over $k$, let $\mathcal{L}$ be an ample line bundle on $\overline{X}$, and let $X\subset \overline{X}$ be a dense open subscheme. 
We say that $X$ is \emph{weakly bounded over $k$ in $\overline{X}$ with respect to $\mathcal{L}$} if, for every integer $g\geq 0$, and every $d\geq 0$, there  is a real number $\alpha(X,\overline{X}, \mathcal{L},g,d)$ such that, for every smooth projective connected curve $\overline{C}$ over $k$ of genus $g$ and   every dense open subscheme $C\subset \overline{C}$ with $\# (\overline{C}\setminus C) = d$ and   every morphism $f:C\to X$, the following inequality
\[ 
\deg_{\overline{C}} \overline{f}^\ast \mathcal{L} \leq \alpha(\mathcal{L}, g,d)
\] holds, where $\overline{f}:\overline{C}\to \overline{X}$ is the unique morphism restricting to $f:C\to X$.
 \end{definition}
 
For $Y$ and $X$ projective schemes over $k$, we let $\underline{\Hom}_k(Y,X)$ be the moduli scheme parametrizing morphisms $Y\to X$; recall that $\underline{\Hom}_k(Y,X)$ is a disjoint union of quasi-projective schemes over $k$ (see \cite[\S2]{DebarreBook1}).
We will make use of the following basic proposition.
 \begin{proposition}\label{thm:kl}  
 Let $\overline{X}$ be a projective variety over $k$, let $\mathcal{L}$ be an ample line bundle on $\overline{X}$, and let $X\subset \overline{X}$ be a dense open subscheme. Let $\overline{C}$ be a smooth projective curve and let $C\subset\overline{C}$ be a dense open subscheme. If $X$ is weakly bounded over $k$ in $\overline{X}$ with respect to $\mathcal{L}$, then  
 $\Hom_k(C,X)$ is a   quasi-compact constructible subset of $\underline{\Hom}_k(\overline{C},\overline{X})(k)$.
 \end{proposition}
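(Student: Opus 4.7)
The plan is to reduce everything to Chevalley's theorem on constructible images, after first using the valuative criterion and weak boundedness to control in which components of the Hom scheme a map $C\to X$ can land.

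First I would observe that every morphism $f\colon C\to X$ extends uniquely to a morphism $\overline{f}\colon \overline{C}\to \overline{X}$: this is the standard extension lemma using that $\overline{C}$ is a smooth proper curve and $\overline{X}$ is proper, so points of $\overline{C}\setminus C$ can be filled in using the valuative criterion of properness. This gives a natural injection $\Hom_k(C,X)\hookrightarrow \underline{\Hom}_k(\overline{C},\overline{X})(k)$, so the question is to identify the image. Write $H:=\underline{\Hom}_k(\overline{C},\overline{X})$. The scheme $H$ decomposes as a disjoint union $H=\bigsqcup_n H^n$ of quasi-projective schemes, where $H^n$ parametrizes morphisms $\overline{f}$ with $\deg_{\overline{C}}\overline{f}^{\ast}\mathcal{L}=n$ (standard for $\underline{\Hom}$-schemes into a polarized projective scheme).

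Next I would use weak boundedness to cut this down to finitely many components. By hypothesis, every $f\in\Hom_k(C,X)$ satisfies $\deg_{\overline{C}}\overline{f}^{\ast}\mathcal{L}\leq \alpha(X,\overline{X},\mathcal{L},g,d)$ where $g=g(\overline{C})$ and $d=\#(\overline{C}\setminus C)$. Since $\mathcal{L}$ is ample, the degree of $\overline{f}^{\ast}\mathcal{L}$ is also non-negative (it is $0$ for constant maps and strictly positive for non-constant maps, which are automatically finite onto their image). Hence the image of $\Hom_k(C,X)$ lies in the finite union $\bigcup_{0\leq n\leq \alpha}H^n(k)$, which is quasi-compact.

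For constructibility, I would use the universal evaluation $\mathrm{ev}\colon \overline{C}\times_k H\to \overline{X}$. Let $Z:=\mathrm{ev}^{-1}(\overline{X}\setminus X)$, a closed subscheme of $\overline{C}\times_k H$, and let $\pi\colon \overline{C}\times_k H\to H$ be the projection. A $k$-point $h\in H(k)$ corresponds to a morphism extending some $f\colon C\to X$ if and only if $Z_h\cap(C\times\{h\})=\emptyset$, equivalently $h\notin \pi(Z\cap(C\times_k H))$. Since $\pi$ restricted to the finite-type scheme $Z\cap(C\times_k H)$ is a morphism of finite-type schemes, Chevalley's theorem guarantees that $\pi(Z\cap(C\times_k H))$ is constructible in $H$, and hence its complement — which equals the image of $\Hom_k(C,X)$ — is also constructible.

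The main obstacle is honestly just the bookkeeping of combining the two ingredients: neither step is deep, but one needs to be careful that (i) the Hom scheme really splits by $\mathcal{L}$-degree into components that are individually quasi-projective (and in particular of finite type over $k$), and (ii) the evaluation/projection argument really gives a constructible subset. Once one has both, the conclusion is immediate: $\Hom_k(C,X)=\bigsqcup_{0\leq n\leq\alpha}\bigl(\Hom_k(C,X)\cap H^n(k)\bigr)$ is a finite union of constructible subsets inside the quasi-compact subscheme $\bigsqcup_{0\leq n\leq\alpha}H^n\subset H$, which yields that $\Hom_k(C,X)$ is a quasi-compact constructible subset of $H(k)$, as desired.
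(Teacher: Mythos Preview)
Your argument is correct and essentially identical to the paper's: both use weak boundedness to land in a finite-type piece of the Hom scheme, then apply Chevalley's theorem to the image (under projection) of the preimage of the boundary $\overline{X}\setminus X$ under the universal evaluation map. The paper is slightly terser (it omits the explicit valuative-criterion extension step and works directly with $\underline{\Hom}^{\leq\alpha}$ rather than the degree decomposition into the $H^n$), but these are cosmetic differences.
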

 \begin{proof}
 
Let $g$ be the genus of $\overline{C}$ and let $d:=\#(\overline{C}\setminus C)$. Let $\alpha:=\alpha(\mathcal{L},g,d)$ be the real number in Definition \ref{def:wb1}, and note that $\Hom_k(C,X)$ is a subset of  the scheme $\underline{\Hom}_k^{\leq \alpha}(\overline{C},\overline{X})$ parametrizing morphisms $\overline{C}\to \overline{X}$ of degree at most $\alpha$ (with respect to $\mathcal{L}$).

 Consider  the natural morphisms of finite type schemes
 \[
\mathrm{ev}\colon C\times  \Hom^{\leq \alpha}_k(\overline{C},\overline{X}) \to \overline{X}, \quad \mathrm{pr}:C\times  \Hom_k^{\leq \alpha}(\overline{C},\overline{X}) \to  \Hom_k^{\leq \alpha}(\overline{C},\overline{X}).
 \] Let $\Delta$ be the boundary of $X$ in $\overline{X}$. Let $\Delta' := \mathrm{ev}^{-1}\Delta$ be the (closed) inverse image of $\Delta$ in the finite type $k$-scheme $C\times \Hom_k^{\leq\alpha}(\overline{C},\overline{X})$. Let  $Z:=\mathrm{pr}(\Delta') $ be the   image of $\Delta'$ in $\Hom_k^{\leq \alpha}(\overline{C},\overline{X})$, and note that $Z$ is constructible \cite[Tag~054J]{stacks-project}. As the complement of $Z$ is a constructible subset of the finite type $k$-scheme $\underline{\Hom}_k^{\leq\alpha}(\overline{C},\overline{X})$ whose $k$-points equal   $\Hom_k(C,X)$, this shows that $\Hom_k(C,X)$ is a finite union of locally closed subschemes of $\underline{\Hom}_k^{\leq \alpha}(\overline{C},\overline{X})$.
 \end{proof}

% \begin{remark} In the statement of Proposition \ref{thm:kl}, 
% it seems reasonable to suspect that $\Hom_k(C,X)$ is in fact locally closed in $\underline{\Hom}_k(\overline{C},\overline{X})(k)$ (and not merely constructible). As we do not need this additional structure of $\Hom_k(C,X)$ (in this paper), we will not discuss it any further. (However, provided $k=\CC$ and  $X$ is ``hyperbolically embedded in $\overline{X}$'', then it is shown in \cite{JLev} that $\Hom_{\CC}(C,X)$ is in fact a closed   complete subvariety of $\Hom_{\CC}(\overline{C},\overline{X})(\CC)$.)
% \end{remark}
% 

 \begin{definition}\label{def:wb}
A quasi-projective scheme $X$ over $k$ is \emph{weakly bounded over $k$} if there exists a projective scheme  $\overline{X}$  over $k$, an ample line bundle $\mathcal{L}$  on $\overline{X}$, and an open immersion $X\subset \overline{X}$ such that $X$ is weakly bounded over $k$ in $\overline{X}$ with respect to $\mathcal{L}$.
 \end{definition}
 
 \begin{proposition}\label{thm:kl2}
 Let $X$ be a weakly bounded quasi-projective scheme over $k$, and let $C\subset \overline{C}$ be a dense open of a smooth projective connected curve $\overline{C}$ over $k$. Then,  for every projective variety $\overline{X}$ over $k$ and every open immersion $X\to \overline{X}$, the subset $\Hom_k(C,X)$  of $\underline{\Hom}_k(\overline{C},\overline{X})(k)$ is quasi-compact and constructible.
 \end{proposition}
 \begin{proof}
 Let $\overline{X}'$ be such that $X$ is weakly bounded in $\overline{X}'$ with respect to some ample line bundle. Then, $\Hom(C,X)$ is quasi-compact constructible in $\underline{\Hom}_k(\overline{C},\overline{X}')(k)$ by Proposition \ref{thm:kl}. Now, to prove the proposition, we choose a projective variety $Z$, an open immersion $X\subset Z$,  a proper birational morphism $Z\to \overline{X}'$
  which is an isomorphism over $X$, and a proper birational morphism $Z\to \overline{X}$ which is an isomorphism over $X$.
  Note that, the inverse image  of the quasi-compact constructible subset  $\Hom_k(C,X)\subset\Hom_k(\overline{C},\overline{X}')(k)$ in $\underline{\Hom}_k(\overline{C},Z)(k)$
along the finitely presented morphism of $k$-schemes $\underline{\Hom}_k(\overline{C},Z)\to \underline{\Hom}_k(\overline{C},\overline{X}')$ is again a quasi-compact   constructible subset and equals $\Hom_k(C,X)$.  Moreover,  the image of the latter quasi-compact constructible subset in $\underline{\Hom}_k(\overline{C},\overline{X})(k)$ along the finitely presented morphism of  $k$-schemes is again equal to $\Hom_k(C,X)$ and quasi-compact constructible (by Chevalley's theorem). This concludes the proof.
    \end{proof}

 \begin{lemma}\label{lem:nond}
  Let $\ZZ\subset A$ be a finitely generated integral domain of characteristic zero and let $\mathcal{X}$ be a finite type scheme over $A$. Let $k:=\overline{\mathrm{Frac}(A)}$ be an algebraic closure of $\mathrm{Frac}(A)$. Let $k\subset L$ be an extension of algebraically closed fields with $L$ of transcendence degree one over $k$. Assume that $X:=\mathcal{X}_k$ is  quasi-projective   over $k$.  Assume the following two properties hold.
 \begin{enumerate}
 \item The  variety $X_L$ is weakly bounded and
 geometrically hyperbolic over $L$.
 \item For every finitely generated subalgebra $A'\subset k$ containing $A$, the set  $$\mathcal{X}(A') = \Hom_A(\Spec A', \mathcal{X})$$ is not dense in $X$. 
 \end{enumerate}
 Then, for any finitely generated subring $B\subset L$ containing $A$, the set $\mathcal{X}(B) $ is not dense in $X(L)$.
 \end{lemma}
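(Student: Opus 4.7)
I will argue by contradiction. Suppose that $B \subset L$ is a finitely generated subring containing $A$ such that $\mathcal{X}(B)$ is Zariski-dense in $X(L)$, and pick a sequence $(\sigma_i)_{i \geq 1}$ in $\mathcal{X}(B)$ whose induced $L$-points form a Zariski-dense subset of $X_L$. The field $F := k \cdot \mathrm{Frac}(B) \subset L$ has transcendence degree at most one over $k$. If $\mathrm{tr.deg}_k F = 0$ then $\mathrm{Frac}(B) \subset k$ (since $k$ is algebraically closed in $L$), so $B$ is contained in a finitely generated $A$-subalgebra $A' \subset k$; but then the image of $\{\sigma_i(L)\}$ in $X(L)$ is contained in the image of $\mathcal{X}(A') \to X(k) \hookrightarrow X(L)$, which is not Zariski-dense by hypothesis (2), a contradiction. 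So I assume $\mathrm{tr.deg}_k F = 1$. Let $\overline{C}$ be the smooth projective curve over $k$ with $\kappa(\overline{C}) = F$, let $D \subset \overline{C}$ be a finite set of closed points containing the poles of a chosen finite generating set of $B$, and set $C := \overline{C} \setminus D$, so that $B \hookrightarrow \mathcal{O}(C)$ as $A$-algebras. This inclusion yields a $k$-morphism $C \to (\Spec B) \times_{\Spec A} \Spec k$ dominating an irreducible component, and composing with $\sigma_i \otimes_A k$ produces $k$-morphisms $f_i: C \to X$. Chasing the factorization $\Spec L \to \Spec F \to C \to \Spec B$ shows that $\sigma_i(L) \in X(L)$ is the image of $f_i(\eta_{\overline{C}}) \in X(F)$ under $X(F) \to X(L)$.

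Next I bring in weak boundedness and geometric hyperbolicity. Fix a projective compactification $X \subset \overline{X}$ and an ample line bundle $\mathcal{L}$ witnessing Definition \ref{def:wb1}, and set $\alpha := \alpha(\mathcal{L}, g(\overline{C}), \#D)$. By Proposition \ref{thm:kl}, the set $\Hom_k(C, X)$, and hence $\{f_i\}$, is a constructible subset of the finite-type $k$-scheme $\mathscr{H} := \underline{\Hom}^{\leq \alpha}_k(\overline{C}, \overline{X})$. Let $H$ be the Zariski closure of $\{f_i\}$ in $\mathscr{H}$. After passing to an irreducible component of $H$ carrying infinitely many $f_i$ and, if necessary, reducing $\mathcal{X}$ to a suitable irreducible component so that $X$ is irreducible and (2) still holds for the reduced data, I may assume both $H$ and $X$ are irreducible; since $k$ is algebraically closed of characteristic zero, $H$ is then geometrically integral. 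Set $H_0 := H \cap \Hom_k(C, X)$, a constructible subset of $H$ containing $\{f_i\}$, hence Zariski-dense in $H$.

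The crux is a two-sided estimate $\dim H = \dim X$. For the lower bound $\dim H \geq \dim X$: density of $\{\sigma_i(L)\}$ in $X_L$ forces density of $\{\sigma_i(F) = f_i(\eta_{\overline{C}})\}$ in $X_F$, since any proper closed $F$-subscheme of $X_F$ through which all $\sigma_i(F)$ factored would base-change to a proper closed $L$-subscheme of $X_L$ containing every $\sigma_i(L)$. As $H$ is geometrically integral, the $k$-dense set $\{f_i\} \subset H(k) \subset H_F(F)$ remains Zariski-dense in $H_F$ (a proper closed subscheme of $H_F$ would project to a proper closed subset of $H$ that cannot contain the dense $\{f_i\}$), so the $F$-morphism $\mathrm{ev}_\eta: H_F \to \overline{X}_F$ obtained from $\mathrm{ev}: \overline{C} \times H \to \overline{X}$ by evaluating at $\eta_{\overline{C}}$ sends the dense set $\{f_i\}$ to $\{\sigma_i(F)\}$; continuity of $\mathrm{ev}_\eta$ yields $\overline{\mathrm{ev}_\eta(H_F)} \supseteq \overline{\{\sigma_i(F)\}} \supseteq X_F$, and therefore $\dim H = \dim H_F \geq \dim X$. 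For the upper bound $\dim H \leq \dim X$: fix any $c \in C(k)$ and restrict the evaluation $\mathrm{ev}_c: H \to \overline{X}$ to $H_0$, where it lands in $X$; geometric hyperbolicity of $X$ over $k$ ensures that the fibres of $\mathrm{ev}_c|_{H_0}: H_0 \to X$ over every $x \in X(k)$ are finite, so $\dim H = \dim H_0 \leq \dim X$.

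Finally, these estimates yield the contradiction. With $\dim H = \dim X$, $H$ and $X$ irreducible, and $\mathrm{ev}_c|_{H_0}$ quasi-finite on $k$-points, the constructible image $\mathrm{ev}_c(H_0) \subset X$ has dimension $\dim X$ and is therefore Zariski-dense in $X$. The dense-image principle ``continuous image of a dense subset has the same closure as the image of the whole'' then forces $\{f_i(c)\} = \mathrm{ev}_c(\{f_i\})$ itself to be Zariski-dense in $X$. But each $f_i(c)$ is the composite $\Spec k \to C \to \Spec B \xrightarrow{\sigma_i} \mathcal{X}$, which factors through $\Spec A'_c \to \Spec B$, where $A'_c \subset k$ is the finitely generated $A$-subalgebra generated by the image of the ring map $B \to k$ dual to $c \in C(k)$. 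Hence $\{f_i(c)\} \subset \mathcal{X}(A'_c)$, contradicting the non-density of $\mathcal{X}(A'_c)$ asserted by (2). I expect the most delicate part of the argument to be the base-change step producing $\dim H \geq \dim X$: one must carefully verify that density of the $L$-points really descends to density of the $F$-points $f_i(\eta_{\overline{C}})$ in $X_F$, and that $H$ can be arranged to be geometrically integral so that the dense set of $k$-points $\{f_i\}$ remains a dense set of $F$-points in $H_F$.
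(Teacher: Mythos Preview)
Your argument is correct and shares the paper's core strategy: use weak boundedness (via Proposition~\ref{thm:kl}) to place the morphisms $C\to X$ coming from $\mathcal{X}(B)$ inside a quasi-compact constructible piece of a Hom scheme, then use geometric hyperbolicity to bound dimensions through an evaluation-at-a-$k$-point map, and finally invoke hypothesis~(2) at that point. The execution differs in two minor respects. First, the paper proceeds directly rather than by contradiction: it shrinks $\Spec B$ so as to have a section $\sigma:\Spec A\to\mathcal{C}$, whence every $f$ arising from $\mathcal{X}(B)$ satisfies $f(\sigma_k)\in\mathcal{X}(A)\subset\Delta:=\overline{\mathcal{X}(A)}$; geometric hyperbolicity then forces the closure of these $f$'s to have dimension $\le\dim\Delta<\dim X$, so their generic-point images cannot fill $X_L$. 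Second, and relatedly, you avoid the section altogether by choosing an arbitrary $c\in C(k)$ and working with the finitely generated subring $A'_c\subset k$; this buys a little extra flexibility at the cost of running the two-sided estimate $\dim H=\dim X$ and extracting a contradiction from the density of $\{f_i(c)\}$. Your flagged worry about density of $k$-points persisting to $H_F$ is harmless: for an integral variety over an algebraically closed field, a Zariski-dense set of $k$-points stays Zariski-dense after any field extension.
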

 
 \begin{proof}  
To prove the statement, let $B\subset L$ be a finitely generated subring containing $A$ and define $K :=\mathrm{Frac}(B)$.  We now show that $\mathcal{X}(B)$ is not dense in $X(L)$. 

Note that if $K$ has transcendence degree zero over $\mathrm{Frac}(A)$, then it follows from $(2)$ that $\mathcal{X}(B)$ is not dense in $X(L)$. Therefore, to prove the lemma, we may and do assume that $K=\mathrm{Frac}(B)$ has transcendence degree one over $\mathrm{Frac}(A)$. Moreover, replacing  $A$ by a finitely generated sub-$A$-algebra of $k$, we may and do assume that the scheme $\mathcal{C}:=\Spec B$ over $\Spec A$ has  a section $\sigma:\Spec A \to \mathcal{C}$ and that $\mathcal{C}\to \Spec A$ is a smooth morphism.

 Define $C:=\mathcal{C}_k = \mathcal{C}\times_A k$, and note that $C$ is a smooth affine   curve over $k$. Furthermore, since $\mathcal{C}(A)\neq \emptyset$ and $\mathcal{C}$ is an integral scheme, it follows that $C$ is connected.
    We let $\sigma_k$ be the $k$-rational point of $C$ induced by the section $\sigma:\Spec A \to \mathcal{C}$; we will also view this as an $L$-point of $C_L$. Let $\overline{C}$ be the smooth projective connected model of $C$ over $k$. Now,  let $\Delta$ be the closure of the subset $\mathrm{Im}[ \mathcal{X}(A) \to  X(k)]$ in $X$ with the reduced closed subscheme structure,  and note that  $\Delta\subsetneq  X$ is a proper closed subscheme  by our second assumption $(2)$.
    
  Let $\overline{X}$  be a projective variety over $k$ with $X\subset \overline{X}$ an open immersion.   Since $X_L$ is weakly bounded over $L$, it follows that  $\Hom_L(C_L,X_L)$ is a quasi-compact constructible subset  of $\underline{\Hom}_L(\overline{C}_L,\overline{X}_L)(L)$; see Proposition \ref{thm:kl2}.
We define $Z\subset \Hom_L(C_L,X_L)(L)$ to be the closure of \[\mathrm{Im}[\mathcal{X}(\mathcal{C})\to \Hom_L(C_L,X_L)(L)]\] in $\Hom_L(C_L,X_L)(L)$.   Since $Z\subset \Hom_L(C_L,X_L)(L)$ is closed, it follows that $Z$ is a quasi-compact constructible subset of $\underline{\Hom}_L(
\overline{C}_L,\overline{X}_L)(L)$.

  Note that the evaluation map $Z\to X(L)$ which sends $f$ to $f(\sigma_k)$ has finite fibres by the geometric hyperbolicity of $X_L$ over $L$. Moreover, since the dense subset $\mathrm{Im}[\mathcal{X}(\mathcal{C})\to \Hom_L(C_L,X_L)]$ of $Z$ lands in $\Delta$, we see that the evaluation map $Z\to X(L)$ factors through $\Delta(L)$.
  In particular, it follows that $$\dim Z \leq \dim \Delta.$$  Therefore, since $\dim \Delta <\dim X   $, we see that
  \begin{eqnarray*}\label{ineqqq} \dim Z &<& \dim X. \end{eqnarray*} 
 Consider the Cartesian diagram of  morphisms of schemes 
  \[
  \xymatrix{
  \underline{\Hom}_L(\overline{C}_L,\overline{X}_L) \ar[rr] \ar[d]  & & \overline{X}\times_k \Spec L \ar[d] 
  \\ \underline{\Hom}_k(\overline{C},\overline{X}) \times_k  \overline{C} \ar[rr] & & \overline{X} \times \overline{C},
  }
  \] where the bottom horizontal arrow is given by the universal evaluation morphism $(f,c)\mapsto (f(c),c)$, and the right vertical morphism is induced by the geometric generic point $\Spec L\to \overline{C}$ of $\overline{C}$. 
  Since $Z$ is a quasi-compact constructible subset of  the $L$-points $\underline{\Hom}_L(\overline{C}_L,\overline{X}_L)(L)$ of the scheme $\underline{\Hom}_L(\overline{C}_L,\overline{X}_L)$, its image $Z'$ along the morphism $  \underline{\Hom}_L(\overline{C}_L,\overline{X}_L) \to \overline{X}_L $ is a quasi-compact constructible subset of $\overline{X}(L)$.
 Note that $Z'\subset X(L)$ and that $\dim Z'\leq \dim Z <\dim X$. Since $\mathrm{Im}[\mathcal{X}(\mathcal{C})\to X(L)]$ is contained in $Z'$ and $\dim Z'<\dim X$, we conclude that  $\mathcal{X}(\mathcal{C})$ is not dense in $X(L)$, as the dimension of the closure of $Z'$ is at most that of $Z'$, by constructibility.
 \end{proof}

 The following result provides a general criterion for proving the persistence of non-density of integral points on an algebraic variety. In fact, in  Theorem \ref{thm:geom_hyp_2}, we verify that a variety with a quasi-finite period map verifies the first property necessary to apply this result.
 
 \begin{theorem}\label{thm:criterion_nd}  Let $ A$ be a finitely generated integral domain of characteric zero and let $\mathcal{X}$ be a finite type scheme over $A$. Let $k:=\overline{\mathrm{Frac}(A)}$ be an algebraic closure of $\mathrm{Frac}(A)$, and let $k\subset L$ be an extension of algebraically closed fields.  Let $X:=\mathcal{X}\times_A k$.  Assume the following two properties hold.
 \begin{enumerate}
 \item The variety $X_L$ is weakly bounded and
 geometrically hyperbolic over $L$.
 \item For every finitely generated subring $A'\subset k$ containing $A$, the set  $$\mathcal{X}(A') = \Hom_A(\Spec A', \mathcal{X})$$ is not dense in $X$. 
 \end{enumerate}
 Then, for any finitely generated subring $B\subset L$ containing $A$, the set $\mathcal{X}(B) $ is not dense in $X_L$.
 \end{theorem}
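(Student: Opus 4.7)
The plan is to induct on $n := \mathrm{tr.deg}(\mathrm{Frac}(B)/\mathrm{Frac}(A))$, with Lemma~\ref{lem:nond} supplying the inductive step. Two ``descent'' facts will be used throughout. First, if $X_L$ is weakly bounded and geometrically hyperbolic over $L$ and $k \subset k^{\dagger} \subset L$ is an intermediate algebraically closed subfield, then $X_{k^{\dagger}}$ inherits both properties over $k^{\dagger}$: for geometric hyperbolicity, the base-change map
\[
\Hom_{k^{\dagger}}((C,c),(X_{k^{\dagger}},x)) \to \Hom_L((C_L,c_L),(X_L,x_L))
\]
is injective (by faithfully flat descent of morphisms along $\Spec L \to \Spec k^{\dagger}$), so the finiteness of the target forces finiteness of the source; for weak boundedness, the degree of the pullback of an ample line bundle is invariant under base change, so any bound valid over $L$ (with respect to, say, a compactification coming from $k$) remains valid over $k^{\dagger}$. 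Second, for any set $S$ of $k^{\dagger}$-rational points of $X_{k^{\dagger}}$, the Zariski closure of $S$ in $X_L$ coincides with the base change to $L$ of its Zariski closure in $X_{k^{\dagger}}$; this is seen locally on affines by expanding an element of $R_{k^{\dagger}} \otimes_{k^{\dagger}} L$ in a $k^{\dagger}$-basis of $L$ and using that a vector with coordinates in $k^{\dagger}$ vanishes iff each coordinate vanishes. Consequently, for collections of $k^{\dagger}$-rational points, non-density in $X_L$ is equivalent to non-density in $X_{k^{\dagger}}$.

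For the base case $n = 0$: then $\mathrm{Frac}(B)$ is algebraic over $\mathrm{Frac}(A)$, so $B \subset k$, and hypothesis~(2) gives non-density of $\mathcal{X}(B)$ in $X$; the second descent fact upgrades this to non-density in $X_L$. For the inductive step $n \geq 1$: choose $t_1,\ldots,t_{n-1} \in B$ algebraically independent over $\mathrm{Frac}(A)$, set $A^{\dagger} := A[t_1,\ldots,t_{n-1}] \subset B$, a finitely generated integral domain of characteristic zero, and put $k^{\dagger} := \overline{\mathrm{Frac}(A^{\dagger})} \subset L$. Let $L^{\dagger} \subset L$ be an algebraically closed subfield of transcendence degree one over $k^{\dagger}$ containing $B$, for instance $L^{\dagger} := \overline{k^{\dagger}(B)}$. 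I then apply Lemma~\ref{lem:nond} to the data $(A^{\dagger}, \mathcal{X}^{\dagger} := \mathcal{X}\otimes_A A^{\dagger}, k^{\dagger} \subset L^{\dagger})$: its hypothesis~(1) follows from the first descent fact, and its hypothesis~(2) demands that $\mathcal{X}^{\dagger}(A^{\dagger\prime}) = \mathcal{X}(A^{\dagger\prime})$ be not dense in $X_{k^{\dagger}}$ for every finitely generated $A^{\dagger\prime} \subset k^{\dagger}$ containing $A^{\dagger}$. Such $A^{\dagger\prime}$ is a finitely generated subring of $L$ containing $A$ with $\mathrm{tr.deg}(\mathrm{Frac}(A^{\dagger\prime})/\mathrm{Frac}(A)) \leq n-1$, so the inductive hypothesis gives non-density of $\mathcal{X}(A^{\dagger\prime})$ in $X_L$, which the second descent fact transfers to non-density in $X_{k^{\dagger}}$. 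Lemma~\ref{lem:nond} thus yields non-density of $\mathcal{X}(B)$ in $X_{L^{\dagger}}$, and one final base change from $L^{\dagger}$ to $L$ completes the inductive step.

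The main obstacle I anticipate is the careful verification of the first descent fact for weak boundedness, which relies on the fact that weak boundedness is independent of the choice of projective compactification and ample line bundle (so that one may compare a compactification of $X_L$ with the base change to $L$ of a compactification over $k$). This is essentially formal, via a common resolution of the two compactifications together with a standard comparison of ample line bundles on a projective variety, but it is the one place where the proof is not purely an exercise in diagram-chasing. Once this and the (routine) second descent fact are in hand, the induction runs smoothly.
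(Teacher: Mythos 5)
Your proof is correct and follows essentially the same route as the paper's: induction on the transcendence degree of $\mathrm{Frac}(B)$ over $\mathrm{Frac}(A)$, with Lemma \ref{lem:nond} supplying the inductive step via an intermediate algebraically closed field of transcendence degree one less. You are in fact somewhat more careful than the paper, which applies Lemma \ref{lem:nond} over the intermediate field $K_0$ while keeping the original base ring $A$ (whose fraction field's algebraic closure is $k$, not $K_0$); your enlargement to $A^{\dagger}=A[t_1,\ldots,t_{n-1}]$ and your explicit descent facts (for weak boundedness, geometric hyperbolicity, and Zariski closures of sets of $k^{\dagger}$-points) supply exactly the details the paper leaves implicit.
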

 \begin{proof} Let $K$ be the algebraic closure of $\mathrm{Frac}(B)$ in $L$, and note that $K$ has finite transcendence degree over $k$. We proceed by induction on the transcendence degree $d$ of $K$ over $k$. If  $d=0$, then the required non-density statement holds by  $(2)$. Now, assume $d>0$ and  let $K_0\subset K$ be an algebraically closed subfield of transcendence degree $d-1$ over $k$. Define $Y:= X_{K_0}$.   Now, as $X_L$ is weakly bounded and geometrically hyperbolic over  $L$, we have that $Y_K$ is weakly bounded and geometrically hyperbolic over $K$.  Moreover, write $\mathcal{Y} =\mathcal{X}$ (for the sake of clarity) and note that,  by the induction hypothesis,  for every finitely generated subring $A'\subset K_0$ containing $A$, the set $\mathcal{Y}(A')$ is not dense in $Y$.
 Therefore, as $K$ has transcendence degree one over $K_0$, we conclude that $X_K=Y_K$ satisfies the required non-density statement (Lemma \ref{lem:nond}). This concludes the proof.
 \end{proof}

\section{Hodge theory}\label{section:hodge_theory}
We set notation by recalling the definition of a period domain, following \cite[Section 3]{Schmid}. Let $H$ be a finitely-generated free $\mathbb{Z}$-module, $k$ an integer, and $\{h^{p,k-p}\}$ a collection of non-negative integers with $h^{p,k-p}=h^{k-p, p}$ for all $p$, such that $$\sum_p h^{p, k-p}=\text{rk}_{\mathbb{Z}} H.$$ Let $\hat{\mathscr{F}}$ be the flag variety parametrizing decreasing, exhaustive, separated filtrations of $H_\mathbb{C}$, $(F^\bullet)$, with $\dim F^p= \sum_{i\geq p} h^{i, k-i}$.

Let $\mathscr{F}\subset \hat{\mathscr{F}}$ be the analytic open subset of $\hat{\mathscr{F}}$ parametrizing those filtrations corresponding to $\mathbb{Z}$-Hodge structures of weight $k$, i.e.~those filtrations with $$H_\mathbb{C}=F^p+\overline{F^{k-p+1}}$$ for all $p$.

Now suppose $q$ is a non-degenerate bilinear form on $H_{\mathbb{Q}}$, symmetric if $k$ is even and skew-symmetric if $k$ is odd.  Let $D\subset \mathscr{F}$ be the locally closed analytic subset of $\mathscr{F}$ consisting of filtrations corresponding to polarized Hodge structures (relative to the polarization $q$), i.e.~the set of filtrations $(F^\bullet)$ in $\mathscr{F}$ with $$q_{\mathbb{C}}(F^p, F^{k-p+1})=0 \text{ for all $p$}$$ and $$q_{\mathbb{C}}(Cv, \bar v)>0$$ for all nonzero $v\in H_{\mathbb{C}}$, where $C$ is the linear operator defined by $C(v)=i^{p-q}v$ for $$v\in H^{p,q}:= F^p\cap \overline{F^q}.$$

Let $G=O(q)$ be the orthogonal group of $q$; it is a $\mathbb{Q}$-algebraic group. We abuse notation to denote $G_{\mathbb{Z}}=G(\mathbb{Q})\cap GL(H)$.  Let $\Gamma\subset G_{\mathbb{Z}}$ be a finite index subgroup. Let $h$ be a point of  the complex-analytic space $\Gamma\backslash D$.

 Let $X$ be an integral   variety over $k$. A complex-analytic map to $f: X\to \Gamma\backslash D$ is \emph{locally liftable} if it locally factors through the quotient map $D\to \Gamma\backslash D$. If $X$ is smooth and $f$ is locally liftable, we say $f$ is \emph{horizontal} if for each $x\in X$ and each tangent vector $v$ in $T_xX$, we have that for a local lifting $\tilde f$ of $f$, $\tilde f_*(v)\in T_{\tilde f(x)}D$ sends $F_{\tilde f(x)}^p$ into $F_{\tilde f(x)}^{p-1}$, for each $p$. Here we regard the tangent space to $D$ at a point $d$ as an element of $$\text{End}(H_{\mathbb{C}})/\text{Lie}(\text{Stab}_{G_{\mathbb{C}}}(F_d^\bullet)).$$ Horizontality is equivalent to the statement that the associated variation of Hodge structure satisfies Griffiths transversality. See \cite[Section 3]{Schmid} for details.

 If $X$ is smooth, then we say that a holomorphic map $X^{\an}\to \Gamma\backslash D$ is \emph{a period map} if it is locally liftable and horizontal (i.e., satisfies Griffiths transversality). More generally, a holomorphic map $X^{\an}\to \Gamma\backslash D$ is a period map if there is a desingularization $\tilde{X}\to X$ such that the composed morphism $\tilde{X}^{\an}\to X^{\an}\to \Gamma\backslash D$ is a period map.

\begin{proposition}[Rigidity Theorem] \label{rigidity-theorem}  Let $D$, $\Gamma$, $h$, and $X$ be as above.
Let $f:X^{\an}\to \Gamma\backslash D$ be a period map and let $g:X^{\an}\to \Gamma\backslash D$ be a period map. Let $x\in X^{\an}$ such that $f(x) = g(x)=h$. Assume that the monodromy representation $f_\ast:\pi_1(X,x)\to \pi_1(\Gamma\backslash D, h)$ of $f$ is isomorphic to the   monodromy representation $g_\ast: \pi_1(X,x)\to \pi_1(\Gamma\backslash D, h)$ of $g$. Then, we have that $f=g$.
\end{proposition}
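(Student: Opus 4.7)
First I plan to reduce to the case where $X$ is smooth. If $\pi : \widetilde X \to X$ is a desingularization and $\widetilde x \in \widetilde X^{\an}$ is a preimage of $x$, then $f \circ \pi^{\an}$ and $g \circ \pi^{\an}$ are period maps sending $\widetilde x$ to $h$ whose monodromy representations are the compositions of $f_\ast, g_\ast$ with $\pi_\ast$; in particular they remain isomorphic. Since $\pi^{\an}$ is surjective, proving the proposition for $\widetilde X$ suffices, so I henceforth assume that $X$ is smooth.

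Next I will translate the data into the language of polarized variations of Hodge structure. The period maps $f$ and $g$ correspond to polarized $\ZZ$-variations of Hodge structure $(\mathbb{V}_f, F_f^{\bullet})$ and $(\mathbb{V}_g, F_g^{\bullet})$ on $X^{\an}$, both with typical fibre $(H, q)$ and Hodge numbers $\{h^{p,k-p}\}$. The hypothesis $f_\ast \simeq g_\ast$ produces an isomorphism $\phi : \mathbb{V}_f \to \mathbb{V}_g$ of the underlying polarized local systems, which I view as a flat global section of the polarizable weight-zero variation of Hodge structure $\mathbb{W} := \mathcal{H}om(\mathbb{V}_f, \mathbb{V}_g)$. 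The hypothesis $f(x) = g(x) = h$ then permits a normalization of the lifts of $f, g$ to $D$ and of $\phi$ so that the stalk $\phi_x$ becomes an isomorphism of polarized Hodge structures at $x$; equivalently, so that $\phi$ is of Hodge type $(0,0)$ at $x$ when regarded as a section of $\mathbb{W}$.

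The key step is then to invoke the Deligne--Schmid Theorem of the Fixed Part, which equips $H^0(X^{\an}, \mathbb{W}_{\QQ})$ with a pure Hodge structure of weight $0$ compatibly with evaluation at every point. Since $\phi$ is of Hodge type $(0,0)$ at $x$, it must therefore be of Hodge type $(0,0)$ at every point of $X$; equivalently, $\phi$ maps $F_f^{\bullet}$ into $F_g^{\bullet}$ everywhere, making it an isomorphism of polarized variations of Hodge structure. Identifying $\mathbb{V}_f$ with $\mathbb{V}_g$ via $\phi$, the filtrations $F_f^{\bullet}$ and $F_g^{\bullet}$ become parallel filtrations on the same local system that agree at $x$, and hence coincide everywhere; translating back through the dictionary between period maps and variations of Hodge structure, this gives $f = g$.

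The main obstacle is the normalization step in the second paragraph: the isomorphism of monodromy representations only determines $\phi$ up to the centralizer of the image of $f_\ast$, while the condition $f(x) = g(x) = h$ is a separate piece of data about the lifts of $f, g$ to $D$, so coordinating these so that $\phi_x$ genuinely becomes a Hodge-theoretic isomorphism (rather than merely an intertwiner of polarized local systems) requires some care. Once this bookkeeping is carried out, the Theorem of the Fixed Part---a foundational input that the paper explicitly singles out---completes the argument essentially formally.
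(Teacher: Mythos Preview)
The paper does not give a proof here; it simply cites the rigidity theorem as \cite[(7.24)]{Schmid}, noting parenthetically that Deligne proved the geometric case in Hodge~II. Your proposal unpacks that citation: the argument you sketch---reinterpreting $f$ and $g$ as polarized variations of Hodge structure, viewing an intertwiner of the underlying local systems as a flat section of the weight-$0$ variation $\mathcal{H}om(\mathbb{V}_f,\mathbb{V}_g)$, and invoking the Theorem of the Fixed Part to propagate the type-$(0,0)$ condition from $x$ to all of $X$---is essentially Schmid's own proof of (7.24). So your proposal is correct and follows the same route as the reference the paper invokes.

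On the obstacle you flag: in Schmid's formulation the hypothesis is that the monodromy representations are \emph{equal}, not merely isomorphic. Under that reading the two local systems are canonically identified, one may take $\phi=\mathrm{id}$, and the normalization step is vacuous. The paper's word ``isomorphic'' is a mild looseness of language; its sole application of the proposition (in the proof of Theorem~\ref{thm:deligne}, where one studies the fibres of the map $f'\mapsto f'_\ast$ on pointed period maps) makes clear that equality is what is intended, so the bookkeeping concern you raise does not actually arise.
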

\begin{proof}
This is the so-called rigidity theorem of Deligne-Griffiths-Schmid \cite[7.24]{Schmid}. (This was proven in \cite[Section~4]{DeligneII}  in the case that the variation of Hodge structures comes from geometry.)
\end{proof}

The following result is based on ``Arakelov's inequality''. Historically, this started with Arakelov-Parshin \cite{ArakelovShaf} for families of curves of genus at least two, and was then subsequently generalized to more general variations of Hodge structures  by  Deligne, Faltings, Peters, and Jost-Zuo; see \cite{ FaltingsArakelov, Peterss, JostZuo}. However, to obtain the desired statement for varieties with a quasi-finite period map, we need \emph{in addition} to the aforementioned results the recently established algebraization theorem of Bakker-Brunebarbe-Tsimerman \cite{BakkerBrunebarbeTsimerman}.

\begin{theorem}[Consequence of Arakelov's inequality]\label{thm:arakelov}
Let $D$, $\Gamma$, and $X$ be as above. Assume $X$ is quasi-projective over $\CC$ and let $p:X^{\an}\to \Gamma\backslash D$ be a  period map with finite fibres.  Then $X$ is weakly bounded over $\CC$ (see Definition \ref{def:wb}).
\end{theorem}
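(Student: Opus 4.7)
The idea is to combine the classical Arakelov inequality for polarized variations of Hodge structure on a curve with the recent algebraization theorem of Bakker--Brunebarbe--Tsimerman (BBT). Weak boundedness requires producing a projective compactification $\overline{X}$ of $X$ and an ample line bundle $\mathcal{L}$ such that for every map $f : C \to X$ from a dense open subset $C$ of a smooth projective curve $\overline{C}$, the degree $\deg_{\overline{C}} \overline{f}^\ast \mathcal{L}$ is bounded in terms of $\mathcal{L}$, the genus $g$ of $\overline{C}$, and the cusp count $d := \#(\overline{C}\setminus C)$.

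First, I would use BBT \cite{BakkerBrunebarbeTsimerman} to algebraize the target. Since $p : X^{\an} \to \Gamma\backslash D$ has finite fibres, BBT yields a quasi-projective structure on the image $Y := p(X^{\an})$, a projective compactification $\overline{Y}$ of $Y$, and an ample line bundle $\mathcal{N}$ on $\overline{Y}$ whose restriction to $Y$ is (a positive power of) the Griffiths/Hodge line bundle $\det F^{\bullet}$ on $\Gamma \backslash D$ pulled back to $Y$. The induced algebraic morphism $X \to Y$ is quasi-finite. I would then take $\overline{X}$ to be the normalization of $\overline{Y}$ in the function field of $X$, yielding a finite morphism $\phi : \overline{X} \to \overline{Y}$ extending $X \to Y$, and define $\mathcal{L} := \phi^{\ast} \mathcal{N}$, which is ample on $\overline{X}$ because $\phi$ is finite.

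Second, fix $\overline{C}$, $C \subset \overline{C}$ with $\#(\overline{C}\setminus C)=d$, and $f : C \to X$. By the valuative criterion of properness $f$ extends uniquely to $\overline{f} : \overline{C} \to \overline{X}$. The composition $C^{\an} \to X^{\an} \to \Gamma \backslash D$ is a period map and corresponds to a polarized variation of Hodge structure $\mathbb{V}$ on $C$ of the fixed Hodge type. By construction of $\mathcal{N}$ through BBT, the line bundle $\overline{f}^{\ast} \mathcal{L}$ on $\overline{C}$ agrees (up to the fixed positive power) with the Deligne canonical extension of $\det F^{\bullet}\mathbb{V}$ to $\overline{C}$. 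The classical Arakelov inequality (Deligne \cite{DeligneII}, Peters \cite{Peterss}, Jost--Zuo \cite{JostZuo}, following Arakelov \cite{ArakelovShaf}, Parshin \cite{Parshin3}, and Faltings \cite{FaltingsArakelov}) then bounds the degree of this extended Hodge line bundle on $\overline{C}$ by an explicit constant depending only on the numerical invariants of $D$ (rank of $H$, Hodge numbers, polarization) and on $g$ and $d$. This produces the required function $\alpha(\mathcal{L}, g, d)$.

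\textbf{Main obstacle.} The delicate input is the genuine ampleness of $\mathcal{N}$ on the projective compactification $\overline{Y}$ of the image. Classical Hodge theory alone yields only that the Griffiths line bundle is semipositive (and big and nef on a smooth projective compactification of $Y$ in good cases); this is insufficient to bound $\deg_{\overline{C}}\overline{f}^{\ast}\mathcal{L}$ for arbitrary $f$, because pulling back a merely nef line bundle along morphisms contracting curves loses all control. It is precisely BBT's theorem --- that for a period map with discrete (here, finite) fibres one can algebraize the image and exhibit the Hodge line bundle as the restriction of an ample bundle on a projective compactification --- that promotes the classical Arakelov bound on Hodge bundles into the desired weak boundedness statement for $X$.
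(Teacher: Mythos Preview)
Your proposal is correct and follows essentially the same two-step strategy as the paper: bound the degree of the pulled-back Hodge line bundle on $\overline{C}$ via the classical Arakelov inequality (the paper cites Peters \cite{Petersa} specifically), and invoke Bakker--Brunebarbe--Tsimerman to guarantee that this Hodge line bundle is the restriction of an ample line bundle on a projective compactification. The only difference is one of presentation: the paper simply asserts that $\mathscr{L}=\bigotimes_i\det(\mathscr{F}^i)$ is ample by \cite{BakkerBrunebarbeTsimerman} without spelling out the compactification, whereas you make the construction explicit by first algebraizing the image $Y=p(X^{\an})$, compactifying it, and then taking $\overline{X}$ to be the normalization of $\overline{Y}$ in $K(X)$ so that the finite pullback of the ample $\mathcal{N}$ is ample. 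Your extra care here is reasonable (and implicitly uses Zariski's Main Theorem to see $X$ as an open in that normalization, which requires $X$ normal; this is harmless since maps from smooth curves factor through the normalization), but it does not constitute a genuinely different route.
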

\begin{proof} If $X$ is smooth, then Theorem \ref{thm:arakelov} can be deduced directly from recent results of Deng \cite{Deng}. Indeed, in \emph{loc. cit.} Deng shows that $X$ is in fact "algebraically hyperbolic". If $X$ is singular, we argue as follows.

Let $g: C\to X$ be a map as in Definition \ref{def:wb}. The map $f\circ g$ classifies a variation of Hodge structure on $C$; let $\mathscr{V}^{p, k-p}$ be the associated Hodge bundle, and let $\mathscr{F}^p$ be the associated filtration. Consider the Griffiths bundle $$\mathscr{L}=\bigotimes_i \det(\mathscr{F}^i).$$ By   \cite[Theorem~6.2]{BakkerBrunebarbeTsimerman}, the line bundle $\mathscr{L}$ is ample. Thus, to conclude the proof, it suffices to show that $X$ is weakly bounded (in some compactification) with respect to $\mathscr{L}$.  

To show that the desired  bound exists, we first appeal to a result of Peters. In fact, by \cite[Theorem 3.1]{Petersa}, we have that,  for each $p$, the integer $\deg \mathscr{V}^{p,k-p}$ is bounded in terms of only the genus of $\overline{C}$ and $\#(\overline{C}\setminus C)$ (as well as invariants of the variation of Hodge structure on $X$, which is fixed).

Now, it follows from the aforementioned result of Peters that the degree of $C$ with respect to the auxiliary variation of Hodge structures $\otimes_{p\in \mathbb{Z}} \Lambda^{r_p} \mathscr{V}$ with $r_p = \mathrm{rank} \ \mathscr{F}^p$  is bounded by a constant depending only on $\overline{C}$, $\#(\overline{C}\setminus C)$, $n$, and invariants of $\mathscr{V}$. In particular, since the Griffiths line bundle $\mathcal{L}$ is the lowest piece of the Hodge filtration of   $\otimes_{p\in \mathbb{Z}} \Lambda^{r_p} \mathscr{V}$, 
% Now, by  \cite[Theorem~6.2]{BakkerBrunebarbeTsimerman},  there is an integer $n\geq 1$ such that the sections of $\mathscr{L}^{\otimes n}$ vanishing at the boundary (see \cite[Definition~6.1]{BakkerBrunebarbeTsimerman}) realize $X$ as a quasi-projective scheme. Unraveling the definitions, this implies that $\mathscr{L}^{\otimes n}$ is a subbundle of $(\oplus_{p,k}\mathcal{V}^{p,k-p})^{\otimes n}$.  Since the degree of $C$ with respect to $(\oplus_{p,k}\mathcal{V}^{p,k-p})^{\otimes n}$  is bounded by a constant depending only on $\overline{C}$, $\#(\overline{C}\setminus C)$, $n$, and invariants of the variation of Hodge structures on $X$,
 the same holds for the degree of $C\to X$ with respect to $\mathcal{L}$. We conclude that $X$ is weakly bounded, as required. 
 \end{proof}

 \begin{remark} 
In the proof of Theorem \ref{thm:arakelov} we appeal to the recent work of Bakker-Brunebarbe-Tsimerman \cite{BakkerBrunebarbeTsimerman} to guarantee the ampleness of $\mathscr{L}$.  If $X$ is smooth, we expect that this fact can also be deduced from the methods of Sommese's classical paper \cite{Sommese78}.   
\end{remark}

\section{Proof of Theorems   \ref{thm:intro} and \ref{thm:deligne_intro} }
The geometric finiteness property we require in this paper to prove the persistence of arithmetic hyperbolicity for varieties with a quasi-finite period map is provided by the following finiteness theorem; see   Definition \ref{def:geom_hyp} for the definition of geometric hyperbolicity.

We stress that the following theorem is proven by combining    Deligne's finiteness result for monodromy representations \cite{Delignemonodromy} with Deligne-Grifitths-Schmid's `Rigidity Theorem'' (Proposition \ref{rigidity-theorem}).  Deligne's finiteness theorem for monodromy representations used below generalizes the result of Faltings for weight one monodromy representations \cite{FaltingsArakelov}, but itself does not immediately imply the desired finiteness result we require.

\begin{theorem} [Deligne + Rigidity Theorem]\label{thm:deligne}  
Let $X$ be a variety  over $k$ which admits (up to Galois conjugation) a quasi-finite period map. Then $X$ is geometrically hyperbolic over $k$.
\end{theorem}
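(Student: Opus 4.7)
I would first reduce to $k = \CC$ by spreading out the given curve $C$, points $c$ and $x$, and putative morphisms $f_i$ to a countably generated subfield of $k$ containing $k_0$ and then extending $k_0 \hookrightarrow \CC$. Once over $\CC$, suppose for contradiction there are pairwise distinct algebraic morphisms $f_1, f_2, \ldots : C \to X$ with $f_i(c) = x$, and set $g_i := p \circ f_i^{\an} : C^{\an} \to \Gamma \backslash D$. Each $g_i$ is a period map with $g_i(c) = p(x)$. Deligne's finiteness theorem \cite{Delignemonodromy} for monodromy representations of polarized $\ZZ$-variations of Hodge structure on $C$ of the fixed type determined by $D$ yields only finitely many possibilities for the monodromy $\rho_i : \pi_1(C^{\an}, c) \to \Gamma$. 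Since $g_i(c) = p(x)$ is fixed, Proposition~\ref{rigidity-theorem} forces only finitely many distinct $g_i$, so one may pass to an infinite subfamily on which all $g_i$ agree with a common period map $g$.

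\textbf{Algebraizing and counting lifts.} It remains to show that only finitely many $f_i$ can satisfy $p \circ f_i^{\an} = g$ and $f_i(c) = x$. The Bakker--Brunebarbe--Tsimerman algebraization theorem \cite{BakkerBrunebarbeTsimerman} provides a factorization of $p$ through a quasi-projective algebraic variety $T$ and an algebraic quasi-finite morphism $q : X \to T$, with the remaining analytic map $T^{\an} \to \Gamma \backslash D$ a locally closed embedding. Each $q \circ f_i : C \to T$ is then an algebraic morphism whose analytification equals $g$; by faithfulness of analytification on morphisms into a separated algebraic variety, all $q \circ f_i$ equal a common algebraic morphism $h : C \to T$. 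Hence each $f_i$ is an algebraic section of the quasi-finite, finite-type $C$-scheme $X \times_T C$ whose value at $c$ is $(x, c)$. Such a scheme has finitely many irreducible components, each of dimension at most one and quasi-finite over $C$, so only finitely many such sections exist, contradicting the existence of infinitely many distinct $f_i$.

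\textbf{Main obstacle.} The step I expect to be most delicate is the final algebraization: without BBT, the target $\Gamma \backslash D$ is merely analytic, and the identity $p \circ f_i^{\an} = g$ cannot be converted into an algebraic identity to which standard finiteness for sections of quasi-finite morphisms applies. BBT is therefore indispensable. The other ingredients --- spreading out to $\CC$, Deligne's finiteness theorem, and the Rigidity Theorem in its pointed form --- are by contrast routine applications of already available theorems.
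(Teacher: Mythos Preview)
Your argument is correct and shares the essential structure of the paper's proof: both reduce to $\CC$, compose with the period map $p:X^{\an}\to\Gamma\backslash D$, invoke Deligne's finiteness theorem for monodromy representations, and then apply the Rigidity Theorem (Proposition~\ref{rigidity-theorem}) to conclude that only finitely many period maps $g:(C,c)\to(\Gamma\backslash D,p(x))$ arise.  The divergence is in the last step, namely showing that the map
\[
\Hom_k\bigl((C,c),(X,x)\bigr)\longrightarrow \Hom_{per}\bigl((C,c),(\Gamma\backslash D,p(x))\bigr)
\]
has finite fibres.  The paper handles this in one line, directly from the quasi-finiteness of $p$: if $p\circ f_1^{\an}=p\circ f_2^{\an}$ and $f_1(c)=f_2(c)=x$, then (arguing on germs) $f_1$ and $f_2$ are both sections through $(c,x)$ of the germ of the analytic fibre product $C^{\an}\times_{\Gamma\backslash D} X^{\an}$ over $(C,c)$; since $p$ has an isolated fibre at $x$, this germ is \emph{finite} over $(C,c)$, hence has only finitely many section germs, and by analytic continuation on the connected curve $C$ the germ determines the map.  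No algebraization of the target is needed.

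Your route through the Bakker--Brunebarbe--Tsimerman theorem certainly works --- factoring $p$ as an algebraic quasi-finite $q:X\to T$ followed by an embedding lets you count sections of a quasi-finite $C$-scheme purely algebraically --- but it is considerably heavier machinery than required.  In particular, your assessment that BBT is ``indispensable'' is not correct: the paper's proof of this theorem avoids it entirely (BBT enters the paper only later, in Theorem~\ref{thm:arakelov}, to establish weak boundedness via the ampleness of the Hodge line bundle).  Your reduction to $\CC$ via spreading out over a countably generated subfield is a valid alternative to the paper's use of Lemma~\ref{lem:geomhyp_persists}, and either route is acceptable.
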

\begin{proof}  We may and do assume that $k=\CC$ (by Lemma \ref{lem:geomhyp_persists}) and that $X$ admits a period map $X^{\an}\to \Gamma\backslash D$ with finite fibres.
We wish to show that for each $c$ in $  C$ and $x$ in $X$, the set   $\text{Hom}((C,c), (X,x))$ of maps $\phi:C\to X$ with $\phi(c) =x$ is finite.  The map $$\text{Hom}((C,c), (X,x))\to \text{Hom}((C,c), (\Gamma \backslash D, f(x)))$$ has finite fibers, because $f$ is quasi-finite. Thus it is enough to show that there are finitely many locally liftable, horizontal analytic maps $$f': (C,c)\to (\Gamma \backslash D, f(x)).$$ We denote the set of such maps by $$\text{Hom}_{per}((C,c), \Gamma\backslash D, f(x)).$$ 
By Proposition \ref{rigidity-theorem} above, the map $$\text{Hom}_{per}((C,c), \Gamma\backslash D, f(x))\to \text{Hom}(\pi_1(C,c), \pi_1(\Gamma \backslash D, f(x)))$$ has finite fibers, so it suffices to show that it has finite image. But this is precisely Deligne's finiteness theorem  \cite{Delignemonodromy}.
\end{proof}

\begin{remark} \label{remark:ds} Let $g\geq 8$ be an integer, and let $X$ be the fine moduli space of principally polarized $g$-dimensional abelian varieties with full level $3$ structure over $\mathbb{C}$. 
In \cite{FaltingsArakelov} Faltings showed that there is a smooth curve $C$  such that the set of non-constant morphisms $f:C\to X$ is infinite. Therefore, as $X$ admits a quasi-finite period map, this shows that one can not expect a strengthening of  Theorem \ref{thm:deligne} for   maps $C\to X$. That is, in Theorem \ref{thm:deligne} one needs to consider maps of \emph{pointed} varieties to obtain finiteness.
\end{remark}

 \begin{remark} Suppose that $X$ is a proper scheme over $\CC$ and that $X$ admits a quasi-finite period map. In this case (as $X$ is proper), there is a different proof of Theorem \ref{thm:deligne}.   Indeed, if $X$ admits a quasi-finite period map, then $X$ has no entire curves \cite[Corollary~9.4]{GriffithsSchmid}. Therefore, as $X$ is also proper, it follows from   Urata's theorem  (Example \ref{example:urata}) that $X$ is geometrically hyperbolic.   
\end{remark}

\begin{theorem}[Deligne + Rigidity Theorem + Arakelov inequality]\label{thm:geom_hyp_2}  If  $X$ is a variety over $k$ which admits  a quasi-finite      complex-analytic period map (Definition \ref{defn:periodmap}), then $X$ is weakly bounded over $k$ and, for every variety $Y$ over $k$, every $y\in Y(k)$, and every $x\in X(k)$, the set of morphisms $f:Y\to X$ with $f(y) = x$ is finite.
\end{theorem}
 
 \begin{proof} We first prove that $X$ is geometrically hyperbolic over $k$. 
 
 If $k=\mathbb{C}$, then Theorem \ref{thm:deligne} says that $X$ is geometrically hyperbolic over $\mathbb{C}$. By a standard specialization argument and Lemma \ref{lem:geomhyp_persists}, we have that $X_L$ is geometrically hyperbolic over any algebraically closed field extension $L$ of $k$ (without any additional assumption on $k$). 
 
 Choosing $L$ to be an uncountable algebraically closed field extension of $k$, it follows from  Remark \ref{remark:geomhyp} that,   for every variety $Y$ over $L$, every $y\in Y(L)$, and every $x\in X(L)$, the set of morphisms $f:Y\to X_L$ with $f(y) = x$ is finite. (As this holds over $L$, it certainly also holds  over $k$, as required.)
 
 To conclude the proof, it suffices to note that    $X$ is weakly bounded over $k$ by   Theorem \ref{thm:arakelov}.
 \end{proof}

\begin{proof}[Proof of Theorem \ref{thm:deligne_intro}]
This is part of the statement of  Theorem \ref{thm:geom_hyp_2}.
\end{proof}

\begin{proof}[Proof of Theorem \ref{thm:intro}]   Combine  Proposition \ref{prop:gen0} and Theorem \ref{thm:geom_hyp_2}.
\end{proof}

\begin{proof}[Proof of Theorem \ref{thm:ls}] Replacing $K$ by a finite field extension if necessary, we may choose   an ample divisor $\mathcal{D}$ in $\mathcal{A}$ such that $\mathcal{D}_K = D$.
Let $\mathcal{M}$ be the (fine) moduli space of smooth hypersurfaces in $\mathcal{A}$ over $\mathcal{O}_{K,S}$ representing $\mathcal{D}$, and note that $\mathcal{M}$ is a quasi-projective scheme over $\mathcal{O}_{K,S}$. Lawrence-Sawin showed that $\mathcal{M}_{\Qbar}$ is arithmetically hyperbolic over $\Qbar$ and that $\mathcal{M}_{\CC}$ admits a quasi-finite period map (see \cite[Theorem~1.1]{LS} and \cite[Proposition~5.10]{LS}, respectively). Therefore, by Theorem \ref{thm:intro}, the variety $\mathcal{M}_k$ is arithmetically hyperbolic over any algebraically closed field $k$ of characteristic zero. This implies that $\mathcal{M}(R) $ is finite, as required.
\end{proof}

%
%\begin{proposition}\label{prop:qf_per} \ari{Maybe integrate this into 5.4?} Let $X$ be a  quasi-projective variety over $k$, and let $k\subset L$ be an extension of algebraically closed fields.
%If $X$ admits (up to Galois conjugation) a quasi-finite complex-analytic period map over $k$, then $X_L$ is geometrically hyperbolic and weakly bounded over $L$.
%\end{proposition}
%\begin{proof} We may  assume that $k=L$.
%Since $X$ admits a quasi-finite period map, it follows from Theorem   \ref{thm:deligne} that $X$ is geometrically hyperbolic over $k$. To show that $X$ is weakly bounded over $k$, we may and do assume that $k=\CC$. The required statement then follows from Arakelov's inequality (Theorem \ref{thm:arakelov}). 
%\end{proof}
%

\section{Locally symmetric varieties and Shimura varieties}
A (smooth connected) variety $X$ over $\CC$ is \emph{locally symmetric} if there exists a bounded symmetric domain $D$, a torsionfree arithmetic subgroup of $\mathrm{Aut}(D)$ and an isomorphism of complex analytic spaces $X^{\an}\cong \Gamma\backslash D$; note that $D$ is biholomorphic to the universal cover of $X^{\an}$.   By Baily-Borel's theorem, a locally symmetric variety is   quasi-projective      over $\CC$. 
  We stress that, with our definition, the affine line $\mathbb{A}^1_{\CC}$ is not a locally symmetric variety as $\mathrm{SL}_2(\ZZ)$ is not torsionfree. In fact, standard results in complex analysis imply that a smooth   quasi-projective connected curve over $\CC$ is a locally symmetric variety (in the above sense) if and only if it is  (Kobayashi) hyperbolic (in the sense of \cite[Chapter~3.2]{KobayashiBook}).

If $X$ is a variety over $k$, then $X$ is a \emph{locally symmetric variety over $k$} if there  exists a subfield $k_0\subset k$, an embedding $k_0\to \CC$, a variety $X_{0}$ over $k_0$, and an isomorphism of $k$-schemes $X_{0,k}\cong X$ such that $X_{0,\CC}$ is a locally symmetric variety over $\CC$ (as defined above).

Let $k\subset L$ be an extension of algebraically closed fields of characteristic zero, and let $X$ be a locally symmetric variety over $k$. Then, as $X$ admits a quasi-finite period map,  by combining  Theorem \ref{thm:intro},  Theorem \ref{thm:criterion_nd}, and Theorem \ref{thm:geom_hyp_2}, we obtain the following result.

\begin{theorem}\label{thm:lsv}  The following statements hold.
\begin{enumerate} 
\item  The locally symmetric variety $X_L$ is geometrically hyperbolic over $L$ and weakly bounded over $L$.
\item If $X$ is arithmetically hyperbolic over $k$, then $X_L$ is arithmetically hyperbolic over $L$.
\item Let $A\subset k$ be a finitely generated $\ZZ$-algebra, and let $\mathcal{X}$ be a model for $X$ over $A$. Assume that, for every finitely generated subring $A'\subset k$ containing $A$, the set $\mathcal{X}(A')$ is not dense in $X$. Then, for any finitely generated subring $B\subset L$ containing $A$, the set $\mathcal{X}(B)$ is not dense in $X_L$.
\end{enumerate}  
\end{theorem}

\begin{remark} Examples of locally symmetric quasi-projective varieties are Shimura varieties (associated to a \emph{torsionfree} congruence subgroup of $\Aut(D)$).  
\end{remark}

 \section{The moduli of smooth hypersurfaces}\label{section:hyps}
 In this section we prove Theorem \ref{thm:hypsurf_intro}. Throughout this section, 
let $d\geq 3$ be an integer, let $n\geq 2$  be  an integer, and let $\mathcal{C}_{d;n} = [\mathrm{PGL}_{n+2}\backslash \mathrm{Hilb}_{d,n}]$ be the stack of smooth  hypersurfaces in  $\mathbb{P}^{n+1}$ of degree $d$. The stack $\mathcal{C}_{d;n}$ is a finite type separated Deligne-Mumford algebraic stack over $\ZZ$ with affine coarse space; see \cite{Ben13}. Moreover,  by   \cite{Javan3}, the stack $\mathcal{C}_{d;n,\QQ}$ is uniformisable, i.e., there is a smooth affine scheme  $U:=U_{d,n}$ over $\QQ$ and a finite \'etale morphism $U\to \mathcal{C}_{d;n,\QQ}$.   Now,  if $(d,n)\neq (3,2)$
the natural period map  on the smooth affine scheme   $U_{\CC}^{\an}$ is injective on tangent spaces by a theorem of Griffiths \cite{Fl86} (see also \cite{GriffithsInfTorelli}), as smooth hypersurfaces of degree $d$ in $\mathbb{P}^{n+1}$ satisfy the infinitesimal Torelli property (as $(d,n)\neq (3,2)$). This implies that the associated period map on $U_{\CC}^{\an}$  has finite fibres (see     \cite[Thm.~2.8]{JL}).   
 
The next result says that, for $k$ an algebraically closed field of characteristic zero,   the stack    $\mathcal{C}_{d;n, k}$ is geometrically hyperbolic over $k$. That is, the moduli space of pointed maps from any given pointed variety into the stack is finite, i.e., such maps   to $\mathcal{C}_{d;n,k}$ are \emph{rigid} and form a \emph{bounded} moduli space.
 
 \begin{theorem}  Let $k$ be an algebraically closed field of characteristic zero, and 
  let $X$ be a smooth hypersurface of degree $d$ in $\mathbb{P}^{n+1}_k$ over $k$.
Let $Y$ be an integral   variety over $k$ and let $y$ in $Y(k)$. Then, the set of $Y$-isomorphism classes of smooth hypersurfaces $\mathcal{X}$ of degree $d$ in $\mathbb{P}^{n+1}_Y$ such that $\mathcal{X}_y$ is isomorphic to $X$ over $k$ is finite.
 \end{theorem}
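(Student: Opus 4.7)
The plan is to reinterpret the statement in terms of pointed morphisms to the Deligne--Mumford moduli stack $\mathcal{C} := \mathcal{C}_{d;n,k}$ and then invoke the geometric hyperbolicity results developed earlier. First I would identify the set of $Y$-isomorphism classes of smooth hypersurfaces $\mathcal{X}$ over $Y$ with $\mathcal{X}_y \cong X$ with the set of isomorphism classes of pointed morphisms $(Y,y) \to (\mathcal{C}, [X])$ in the groupoid $\mathcal{C}(Y)$. Under this translation, the claim becomes: this set of isomorphism classes of pointed morphisms is finite.

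Next I would reduce to the case where the base field is uncountable. Assume for contradiction that there exist infinitely many pairwise non-isomorphic $\mathcal{X}_i$ over $Y$ with $\mathcal{X}_{i,y} \cong X$. Let $L \supseteq k$ be an uncountable algebraically closed extension. For each pair $i\neq j$, the Isom-scheme $\mathrm{Isom}_Y(\mathcal{X}_i,\mathcal{X}_j)$ is a torsor under $\mathrm{Aut}_Y(\mathcal{X}_i)$, which is finite étale over $Y$ (using that a smooth hypersurface of degree $d\geq 3$ in $\mathbb{P}^{n+1}$ has finite automorphism group scheme). Hence $\mathrm{Isom}_Y(\mathcal{X}_i,\mathcal{X}_j)\to Y$ is finite étale, and the existence of a section is an fpqc-local property, so non-isomorphism over $Y$ is preserved under the base change to $Y_L$. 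Consequently we obtain infinitely many pairwise non-isomorphic $\mathcal{X}_{i,L}$ over $Y_L$, equivalently infinitely many isomorphism classes of pointed morphisms $(Y_L, y) \to (\mathcal{C}_L, [X_L])$.

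At this point I would plug in the geometric hyperbolicity machinery. As recalled at the beginning of Section \ref{section:hyps}, the uniformization $U_L \to \mathcal{C}_L$ is a finite étale cover by a smooth affine scheme admitting a quasi-finite complex-analytic period map (thanks to Griffiths's infinitesimal Torelli for smooth hypersurfaces, since $(d,n)\neq(3,2)$). By Theorem \ref{thm:deligne}, $U_L$ is geometrically hyperbolic over $L$, and by Remark \ref{remark:cw_geomhyp} the same holds for the stack $\mathcal{C}_L$. Since $L$ is uncountable and $Y_L$ is reduced, Remark \ref{remark:geomhyp} upgrades the finiteness of pointed maps from smooth curves to finiteness of pointed maps from the arbitrary pointed variety $(Y_L, y)$ into the geometrically hyperbolic stack $\mathcal{C}_L$. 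This contradicts the infinite family produced above, completing the proof.

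The main obstacle I anticipate is the descent step in the passage from $k$ to $L$: one must justify that the automorphism group scheme of a smooth hypersurface of degree $d \geq 3$ is finite étale (so that $\mathrm{Isom}_Y(\mathcal{X}_i,\mathcal{X}_j)$ is finite étale over $Y$), and then invoke fpqc descent of isomorphisms for finite étale schemes to ensure that non-isomorphism is preserved by the base change $Y_L \to Y$. Once this point is handled, the remainder is a direct assembly of Theorem \ref{thm:deligne}, Remark \ref{remark:cw_geomhyp}, and Remark \ref{remark:geomhyp}.
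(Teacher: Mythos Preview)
Your approach is essentially the paper's: translate the statement into finiteness of pointed morphisms $(Y,y)\to(\mathcal{C}_{d;n,k},[X])$, pass to an uncountable algebraically closed field, use that the finite \'etale atlas $U$ admits a quasi-finite period map so is geometrically hyperbolic (Theorem~\ref{thm:deligne}), descend geometric hyperbolicity to the stack via Remark~\ref{remark:cw_geomhyp}, and finish with Remark~\ref{remark:geomhyp}. You in fact spell out the reduction to uncountable $k$ more carefully than the paper, which simply asserts ``replacing $k$ by a field extension if necessary''.

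Two small points. First, you silently assume $(d,n)\neq(3,2)$ when invoking infinitesimal Torelli; the paper reduces to this case by the standard cyclic-covering trick, and you should add that sentence. Second, your justification that non-isomorphism over $Y$ persists to $Y_L$ is mislabeled: ``existence of a section is an fpqc-local property'' is not the right slogan, since fpqc descent for morphisms requires a cocycle condition over $Y_L\times_Y Y_L$ that a bare section over $Y_L$ need not satisfy. The correct (and simpler) argument is that for a finite \'etale $Z\to Y$ with $Y$ integral over the algebraically closed field $k$, sections correspond to connected components of $Z$ of degree one over $Y$; since connected components of a finite type $k$-scheme are geometrically connected and degree is stable under base change, sections over $Y$ and over $Y_L$ are in natural bijection. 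With these two adjustments your argument is complete and coincides with the paper's.
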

 \begin{proof} By a standard ``cyclic covering'' argument, we may and do assume that $(d,n)\neq (3,2)$. 
 Moreover, replacing $k$ by a field extension if necessary, we may and do assume that $k$ is uncountable.
Let $U$ be a smooth affine scheme over $k$ such that   there is a finite \'etale morphism $U\to \mathcal{C}_{d;n,k}$ of stacks (see \cite{Javan3}). Since $U$ admits a quasi-finite complex-analytic period map (up to Galois conjugation) by Griffiths's theorem (see for instance \cite{Fl86}), it follows from Theorem \ref{thm:geom_hyp_2} that $U$  is geometrically hyperbolic over $k$. Now, by a standard   descent argument (Remark \ref{remark:cw_geomhyp}), we deduce that the finite type separated Deligne-Mumford algebraic stack $\mathcal{C}_{d;n,k}$ is geometrically hyperbolic over $k$. In particular, as $k$ is uncountable, it follows from Remark \ref{remark:geomhyp} that, for every integral normal variety $Y$ over $k$, every point $y$ in $Y(k)$, and every $x$ in $\mathcal{C}_{d;n}(k)$, the set of morphisms $f:Y\to \mathcal{C}_{d;n,k}$ with $f(y)$ isomorphic to $x$ is finite. 
 \end{proof}

We now use the geometric hyperbolicity of the stack to show that its arithmetic hyperbolicity   persists over field extensions. Concerning arithmetically hyperbolic \emph{stacks},  we follow the conventions of  \cite[\S 4]{JLalg}.
 
 \begin{theorem}\label{thm:ci_arhyp} Let $L$ be an algebraically closed field of characteristic zero.
 The stack $\mathcal{C}_{d;n,\Qbar}$ is arithmetically hyperbolic over $\Qbar$ if and only if  $\mathcal{C}_{d;n,L}$ is arithmetically hyperbolic over $L$.
 \end{theorem}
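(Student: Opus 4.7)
The plan is to reduce the statement about the stack $\mathcal{C}_{d;n}$ to the corresponding statement for a smooth affine scheme that admits a quasi-finite complex-analytic period map, and then to invoke Theorem \ref{thm:intro}.

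First, by the standard ``cyclic covering'' trick invoked earlier in this section, I may and do assume $(d,n) \neq (3,2)$, so that smooth hypersurfaces of degree $d$ in $\mathbb{P}^{n+1}$ satisfy the infinitesimal Torelli property. Next, by \cite{Javan3}, there is a smooth affine $\mathbb{Q}$-scheme $U = U_{d,n}$ equipped with a finite \'etale morphism $U \to \mathcal{C}_{d;n,\mathbb{Q}}$. Base-changing gives finite \'etale morphisms $U_{\Qbar} \to \mathcal{C}_{d;n,\Qbar}$ and $U_k \to \mathcal{C}_{d;n,k}$. By Griffiths's infinitesimal Torelli theorem, $U_k$ admits (up to conjugation) a quasi-finite complex-analytic period map, as recalled in the preceding theorem.

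The core of the proof is then Theorem \ref{thm:intro}, which applied to the scheme $U$ with quasi-finite period map yields the equivalence
\[
U_{\Qbar} \text{ is arithmetically hyperbolic over } \Qbar \iff U_k \text{ is arithmetically hyperbolic over } k.
\]
To conclude, I need to connect the arithmetic hyperbolicity of $U_K$ and of $\mathcal{C}_{d;n, K}$ for each of $K = \Qbar$ and $K = k$. This is the statement that arithmetic hyperbolicity of finite type separated Deligne-Mumford algebraic stacks descends and ascends along finite \'etale morphisms, which (following the conventions of \cite[\S 4]{JLalg} recalled just before the theorem) is the stack-theoretic analogue of the scheme case, and mirrors the behaviour of geometric hyperbolicity under finite \'etale covers recorded in Remark \ref{remark:cw_geomhyp}. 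Chaining the three resulting equivalences (for $U_{\Qbar} \to \mathcal{C}_{d;n,\Qbar}$, for $U_k \to \mathcal{C}_{d;n,k}$, and Theorem \ref{thm:intro} for $U$) gives the theorem.

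The main obstacle I anticipate is the finite \'etale invariance of arithmetic hyperbolicity in the stacky setting, since arithmetic hyperbolicity is a priori a statement about $A$-points of a model, and for a stack one must interpret such points as groupoids of objects. However, the conventions of \cite{JLalg} are set up for exactly this purpose, so the transfer is essentially bookkeeping; no new Hodge-theoretic input beyond what is already packaged in Theorem \ref{thm:intro} should be required.
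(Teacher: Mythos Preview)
Your proposal is correct and takes essentially the same approach as the paper: pass to the finite \'etale atlas $U$, invoke the persistence of arithmetic hyperbolicity for varieties with a quasi-finite period map, and transfer back to the stack via the results of \cite{JLalg}. The only minor differences are that the paper unpacks Theorem~\ref{thm:intro} into its constituents (Theorem~\ref{thm:geom_hyp_2} plus Corollary~\ref{cor:geometricity_arhyp}) and names the precise inputs from \cite{JLalg} (Proposition~4.16 for the quasi-finite direction and the stacky Chevalley--Weil theorem for descent along the finite \'etale cover); the latter is a genuine theorem rather than ``essentially bookkeeping'', but your citation to \cite[\S4]{JLalg} is adequate.
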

 \begin{proof} 
Let $U:=U_{d,n}\to \mathcal{C}_{d;n,\Qbar}$ be a finite \'etale surjective morphism with $U$ a smooth affine   scheme over $\Qbar$ (see \cite{Javan3}). Since $\mathcal{C}_{d;n,\Qbar}$ is arithmetically hyperbolic over $\Qbar$ (by assumption) and $U\to \mathcal{C}_{d;n,\Qbar}$ is quasi-finite, it follows from \cite[Proposition~4.16]{JLalg}  that $U$ is arithmetically hyperbolic over $\Qbar$. Since $U$ admits a quasi-finite   period map (as explained above), it follows from Theorem \ref{thm:geom_hyp_2} that $U_L$ is geometrically hyperbolic over $L$. Thus, as $U$ is arithmetically hyperbolic over $k:=\Qbar$ and $U_L$ is geometrically hyperbolic over   $L$, we conclude that $U_L$ is arithmetically hyperbolic over $L$ (Proposition \ref{prop:gen0}). Now, as $U_L\to \mathcal{C}_{d;n,L}$ is finite \'etale, it follows from   stacky Chevalley-Weil   \cite[Theorem~5.1]{JLalg} that $\mathcal{C}_{d;n,L}$ is arithmetically hyperbolic over $L$.
 \end{proof}

\begin{lemma} \label{lem:finiteness} Let  $K$ be a number field and let $S$ be a finite set of finite places of $K$. Then, for every finite type affine group scheme $G$ over $\OO_{K,S}$,  the set of $\OO_{K,S}$-isomorphism classes of $G$-torsors over $\OO_{K,S}$ is finite.
\end{lemma}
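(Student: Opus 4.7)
The plan is to identify isomorphism classes of $G$-torsors over $\OO_{K,S}$ with the pointed set $H^1_{\mathrm{fppf}}(\Spec \OO_{K,S}, G)$ and then reduce to the classical finiteness theorem for Galois cohomology of smooth affine group schemes over rings of $S$-integers. First, since $\OO_{K,S}$ has characteristic zero, Cartier's theorem gives that the generic fiber $G_K$ is smooth over $K$. Spreading out smoothness, the non-smooth locus of $G \to \Spec \OO_{K,S}$ is closed and supported at most at finitely many closed points; enlarging $S$ to a finite set $S' \supset S$ containing the corresponding places, $G_{\OO_{K,S'}} := G \times_{\OO_{K,S}} \OO_{K,S'}$ becomes smooth.

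Next I would show that the restriction map $H^1_{\mathrm{fppf}}(\OO_{K,S}, G) \to H^1_{\mathrm{fppf}}(\OO_{K,S'}, G)$ has finite fibers. By the standard twisting technique in non-abelian cohomology, it suffices to bound each kernel $\ker(H^1(\OO_{K,S}, G^P) \to H^1(\OO_{K,S'}, G^P))$ for the inner twists $G^P$, which are again smooth affine of finite type. A local-to-global / excision argument along the closed subscheme $Z = \bigsqcup_{v \in S'\setminus S} \Spec \kappa(v)$ shows that such kernels inject into $\prod_{v \in S'\setminus S} H^1(\OO_{K,v}^h, G^P)$; by smoothness and Hensel's lemma, each factor equals $H^1(\kappa(v), G^P_{\kappa(v)})$, which is finite by Lang's theorem applied to the connected component of $G^P_{\kappa(v)}$ together with finiteness of $\pi_0(G^P_{\kappa(v)})$.

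Finally, and this is the main obstacle, one must show $H^1_{\mathrm{fppf}}(\OO_{K,S'}, G)$ is finite for $G$ smooth affine of finite type. The plan is to invoke the classical finiteness theorem (cf.\ Platonov--Rapinchuk) via d\'evissage along $1 \to G^0 \to G \to \pi_0(G) \to 1$ and a Levi-type decomposition of $G^0$. This reduces the problem to four classical inputs: Hermite--Minkowski for finite \'etale $\pi_0(G)$, vanishing of coherent $H^1$ for unipotent groups in characteristic zero (using that $\OO_{K,S'}$ is affine Noetherian and that smooth unipotent groups in characteristic zero are iterated extensions of $\Ga$'s), finiteness for tori via Ono's formula combined with the finiteness of class groups and unit groups (Dirichlet), and the Borel--Serre finiteness theorem for reductive groups.
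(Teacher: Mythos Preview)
The paper does not prove this lemma: it simply records it as a consequence of finiteness results due to Borel--Serre, citing Gille--Moret-Bailly, Proposition~5.1. So your proposal is an attempt to supply an argument where the paper gives only a reference. The overall strategy --- enlarge $S$ to force smoothness, control the restriction map, then d\'evisser the smooth case --- is reasonable in spirit, but there is a genuine gap in the second step.

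After enlarging $S$ to $S'$ so that $G_{\OO_{K,S'}}$ is smooth, you assert that the inner twists $G^P$ over $\OO_{K,S}$ are ``again smooth affine of finite type''. But $G^P$ is an fppf form of $G$ over $\OO_{K,S}$, and smoothness descends along fppf covers, so $G^P$ is smooth at a place $v$ if and only if $G$ is; in particular $G^P$ need not be smooth at the places $v\in S'\setminus S$. Your identification $H^1(\OO_{K,v}^h, G^P)=H^1(\kappa(v), G^P_{\kappa(v)})$ via ``smoothness and Hensel's lemma'' therefore fails precisely where you need it, and with it your finiteness of the restriction fibres. Establishing finiteness of $H^1_{\mathrm{fppf}}$ over a henselian local ring with finite residue field for a possibly non-smooth finite type affine group scheme is itself non-trivial and is part of the content of the references the paper cites. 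There is also a secondary issue in your final step: a ``Levi-type decomposition'' of $G^0$ does not exist over $\OO_{K,S'}$ in general --- Levi's theorem concerns connected groups over characteristic-zero fields, and spreading a Levi subgroup and the unipotent radical of $(G^0)_K$ out to integral models with the correct structural properties requires a further enlargement of $S'$ and genuine SGA3-type input. Likewise, $\pi_0(G)$ is \'etale but only becomes finite after enlarging $S'$ again. None of this is insurmountable, but these are exactly the places where the real work in the cited references lies; note also that for the paper's sole application ($G=\mathrm{PGL}_{n+2}$, smooth reductive over $\ZZ$) the reduction to the smooth case is unnecessary and Borel--Serre applies directly.
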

\begin{proof} 
This is     \cite[Proposition~5.1]{GilleMoretBailly}.
\end{proof}
 
We now show that the Shafarevich conjecture for   hypersurfaces over number fields  \cite[Conjecture~1.4]{JL}    \emph{implies} a finiteness result for    hypersurfaces over finitely generated fields of characteristic zero.

 \begin{proof}[Proof of Theorem \ref{thm:hypsurf_intro}] The assumption is that, for every number field $K$ and every finite set $S$ of finite places of $K$, the set of $\OO_{K,S}$-isomorphism classes of smooth hypersurfaces of degree $d$ in $\mathbb{P}^{n+1}_{\OO_{K,S}}$ is finite. Now, an object of the groupoid   $\mathcal{C}_{d;n}(\OO_{K,S})$ is given by the data of a Brauer-Severi scheme $P$ over $\OO_{K,S}$ and a smooth hypersurface of degree $d$ in $P$. Therefore, 
by \cite[Lemma~4.8]{JL} and the finiteness of $\OO_{K,S}$-isomorphism classes of $(n+1)$-dimensional Brauer-Severi schemes over $\OO_{K,S}$ (which follows from Lemma \ref{lem:finiteness}), the assumption (in the statement of the theorem) implies that $\mathcal{C}_{d;n,\Qbar}$ is arithmetically hyperbolic over $\Qbar$.    Now, let $A$ be a $ \ZZ$-finitely generated normal integral domain of characteristic zero with fraction field $K$. Let $k:=\overline{K}$ be an algebraic closure of $K$ and note that $\mathcal{C}_{d;n,k}$ is arithmetically hyperbolic over $k$ by Theorem \ref{thm:ci_arhyp}. Therefore, since $A$ is normal and $\mathcal{C}_{d;n} $ is a finite type separated Deligne-Mumford algebraic stack over $\ZZ$, we conclude that the set of isomorphism classes of objects of $\mathcal{C}_{d;n}(A)$ is finite from the twisting lemma \cite[\S 4.3]{JLalg}; here we use that $A$ is integrally closed in its fraction field. In particular, the  set of    $A$-isomorphism classes of smooth hypersurfaces of degree $d$ in $\mathbb{P}^{n+1}_A$ is finite.      This concludes the proof. % linear isom and abs isom are the same as long as we avoid K3 surfaces.
 \end{proof}

\section{Non-density of integral points on the Hilbert scheme}\label{section:final}
 In this section  we prove Theorem  \ref{thm:non_density_hyps} (which is a statement about integral points on the Hilbert scheme) by using that the \emph{stack} of smooth hypersurfaces is weakly bounded and geometrically hyperbolic.
 
Recall that $\mathrm{Hilb}_{d,n}$ denotes the Hilbert scheme of smooth hypersurfaces of degree $d$ in $\mathbb{P}^{n+1}$ over $\ZZ$, and that this is a smooth affine scheme over $\ZZ$. As before, we write $\mathcal{C}_{d;n} = [\mathrm{PGL}_{n+2}\backslash \mathrm{Hilb}_{d,n}]$ for the stack of smooth hypersurfaces of degree $d$ in $\mathbb{P}^{n+1}$.
In our proof of Theorem \ref{thm:non_density_hyps} we  will  use Lemma \ref{lem:finiteness} to relate the non-density of  integral
 points on the Hilbert scheme to the non-density of integral points on the stack.

\begin{lemma} \label{lem:descent} Assume that, for every  number field $K$ and finite set of finite places $S$ of $K$, the set  $\mathrm{Hilb}_{d,n}(\OO_{K,S})$ is not dense in $\mathrm{Hilb}_{d,n}$.
Then, for every number field $K$ and finite set of finite places $S$ of $K$, the set of isomorphism classes of objects of $\mathcal{C}_{d;n}(\OO_{K,S})$ is not dense in $\mathcal{C}_{d;n}$.
\end{lemma}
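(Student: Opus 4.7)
The plan is to relate non-density of isomorphism classes on $\mathcal{C}_{d;n}$ to non-density of integral points on $\mathrm{Hilb}_{d,n}$ via the $\mathrm{PGL}_{n+2}$-torsor $\mathrm{Hilb}_{d,n} \to \mathcal{C}_{d;n}$, classifying objects of $\mathcal{C}_{d;n}(\OO_{K,S})$ according to their underlying Brauer-Severi scheme, and then exploiting an arithmetic-group action on the Zariski closure of the relevant integral points.

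First, I would apply Lemma \ref{lem:finiteness} to $\mathrm{PGL}_{n+2}$ over $\OO_{K,S}$ to obtain finitely many Brauer-Severi schemes $P_1, \ldots, P_r$ of relative dimension $n+1$ representing all isomorphism classes of $\mathrm{PGL}_{n+2}$-torsors over $\OO_{K,S}$. Every object of $\mathcal{C}_{d;n}(\OO_{K,S})$ is, after choosing an isomorphism of its Brauer-Severi part with some $P_j$, an $\OO_{K,S}$-point of the twisted Hilbert scheme $\mathrm{Hilb}_{d,n}^{(P_j)}$ of smooth degree-$d$ hypersurfaces in $P_j$. I would then choose a finite extension $L/K$ and a finite set $S_L$ of finite places of $L$ above $S$ such that $P_j \otimes_{\OO_{K,S}} \OO_{L,S_L} \cong \mathbb{P}^{n+1}_{\OO_{L,S_L}}$ for every $j$, which is possible since only finitely many Brauer classes must be killed. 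Fixing such trivializations embeds $\bigsqcup_j \mathrm{Hilb}_{d,n}^{(P_j)}(\OO_{K,S})$ inside $T := \mathrm{Hilb}_{d,n}(\OO_{L,S_L})$.

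By the hypothesis of the lemma applied to $(L, S_L)$, the set $T$ is not Zariski dense in $\mathrm{Hilb}_{d,n}$; let $Y \subset \mathrm{Hilb}_{d,n,L}$ be its (reduced) Zariski closure, a proper closed subscheme. The key step is to upgrade the set-theoretic invariance of $T$ under $\mathrm{PGL}_{n+2}(\OO_{L,S_L})$ to scheme-theoretic $\mathrm{PGL}_{n+2,L}$-invariance of $Y$. The subscheme $\{g \in \mathrm{PGL}_{n+2, L} : g \cdot Y \subset Y\}$ is closed in $\mathrm{PGL}_{n+2, L}$ (by openness of the projection $\mathrm{PGL}_{n+2} \times Y \to \mathrm{PGL}_{n+2}$) and contains the arithmetic group $\mathrm{PGL}_{n+2}(\OO_{L,S_L})$, which is Zariski dense in $\mathrm{PGL}_{n+2, L}$ by Borel density applied to $\mathrm{SL}_{n+2}(\OO_{L,S_L})$. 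Hence the stabilizer is all of $\mathrm{PGL}_{n+2, L}$, and $Y_{\overline{K}}$ is $\mathrm{PGL}_{n+2, \overline{K}}$-invariant as a reduced closed subscheme.

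Finally, each isomorphism class in $\mathcal{C}_{d;n}(\OO_{K,S})$, viewed as a $\overline{K}$-point of $|\mathcal{C}_{d;n}|$, corresponds to a $\mathrm{PGL}_{n+2}(\overline{K})$-orbit in $\mathrm{Hilb}_{d,n, \overline{K}}$ meeting (via the chosen trivializations) the set $T \subset Y_{\overline{K}}$. By the $\mathrm{PGL}_{n+2}$-invariance of $Y_{\overline{K}}$, these entire orbits lie in $Y_{\overline{K}}$, which is a proper $\mathrm{PGL}_{n+2}$-invariant closed subscheme of $\mathrm{Hilb}_{d,n, \overline{K}}$. It therefore corresponds to a proper closed substack of $\mathcal{C}_{d;n, \overline{K}}$ containing the image of $\mathcal{C}_{d;n}(\OO_{K,S})$, yielding the desired non-density. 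The main obstacle is precisely this invariance step: one needs more than the formal invariance of $T$ under $\OO_{L,S_L}$-integral automorphisms, which is where Borel density (or a direct check via unipotent subgroups) enters essentially.
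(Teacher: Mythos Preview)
Your argument is correct and uses the same ingredients as the paper's proof: finiteness of $\PGL_{n+2}$-torsors over $\OO_{K,S}$ (Lemma~\ref{lem:finiteness}), trivialization over a larger ring $\OO_{L,T}$, and Zariski density of $\PGL_{n+2}(\OO_{L,T})$ in $\PGL_{n+2}$. The paper simply runs the contrapositive---assuming $\mathcal{C}_{d;n}(\OO_{K,S})$ is dense, each fibre of $\mathrm{Hilb}_{d,n}\to\mathcal{C}_{d;n}$ over such a point is a trivial torsor over $\OO_{L,T}$ and hence has dense $\OO_{L,T}$-points, so $\mathrm{Hilb}_{d,n}(\OO_{L,T})$ is dense---which bypasses naming the closure $Y$ and the explicit stabilizer/Borel-density step, but is logically the same argument.
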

\begin{proof}
Let $K$ be a number field and let $S$ be a finite set of finite places of $K$. Suppose that $\mathcal{C}_{d;n}(\OO_{K,S})$ is dense in $\mathcal{C}_{d;n}$. Note that the set of $\OO_{K,S}$-isomorphism classes of $\mathrm{PGL}_{n+2}$-torsors over $\OO_{K,S}$ is finite. Let $r\geq 1$  be an integer, and let $T_1,\ldots, T_r$ be representatives for all the $\PGL_{n+2}$-torsors over $\OO_{K,S}$, up to $\OO_{K,S}$-isomorphism. Let $L$ be a number field over $K$ and let $T$ be a finite set of finite places containing all the places of $L$ lying over $S$ such that, for every $i=1,\ldots, r$, the $\mathrm{PGL}_{n+2}$-torsor $T_i$ is trivial over $\OO_{L,T}$. For every $x$ in  $\mathcal{C}_{d;n}(\OO_{K,S})$, the fibre of the torsor $\mathrm{Hilb}_{d,n}\to \mathcal{C}_{d;n}$ over $x$ has a dense set of $\OO_{L,T}$-points. This implies that the set of $\OO_{L,T}$-points $\mathrm{Hilb}_{d,n}(\OO_{L,T})$ of $\mathrm{Hilb}_{d,n}$ is dense.
\end{proof}

\begin{lemma}[Non-density \`a la Chevalley-Weil]\label{lem:cw}
Let $X\to Y$ be a finite \'etale morphism of varieties over $k$. Assume that, for every $\ZZ$-finitely generated subring $A\subset k$ and every (finite type separated) model $\mathcal{X}$ for $X$ over $A$,  the set $\mathcal{X}(A)$ is not dense.
Then, for every $\ZZ$-finitely generated subring  $A\subset k$ and every model $\mathcal{Y}$ for $Y$ over $A$,  the set $\mathcal{Y}(A)$ is not dense in $Y$.
\end{lemma}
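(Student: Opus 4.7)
\emph{Plan.} I argue by contrapositive, adapting the classical Chevalley--Weil lifting argument to the setting of arbitrary finitely generated base rings of characteristic zero. Assume $\mathcal{Y}(A)$ is Zariski-dense in $Y$ for some $\ZZ$-finitely generated $A\subset k$ and some model $\mathcal{Y}$ of $Y$ over $A$; the goal is to produce a $\ZZ$-finitely generated extension $A\subset A'\subset k$ and a model $\mathcal{X}'$ of $X$ over $A'$ with $\mathcal{X}'(A')$ Zariski-dense in $X$, contradicting the hypothesis on $X$.

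After a sequence of harmless reductions --- replacing $A$ by its normalization in $k$ (still $\ZZ$-finitely generated), passing to an irreducible component of $Y$, and spreading out the finite \'etale morphism $X\to Y$ to a finite \'etale morphism $\mathcal{X}\to\mathcal{Y}$ over $\Spec A$ (so that $\mathcal{X}$ is a model of $X$ over $A$) --- I may assume $A$ is a normal domain, $Y$ is irreducible and normal, and each connected component of $X$ is irreducible. For each $y\in\mathcal{Y}(A)$, the fibre $\mathcal{X}_y=\Spec B_y$ is then a finite \'etale $A$-algebra of degree $d=\deg(X/Y)$. Crucially, since $\Spec A$ is a normal, finite type $\ZZ$-scheme, $\pi_1^{\et}(\Spec A)$ is topologically finitely generated, so there are only finitely many isomorphism classes $B_1,\ldots,B_r$ of finite \'etale $A$-algebras of degree $d$. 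Choose $A'\subset k$ to be the integral closure of $A$ in a finite Galois extension of $\mathrm{Frac}(A)$ which simultaneously splits $\mathrm{Frac}(B_1),\ldots,\mathrm{Frac}(B_r)$, so that $B_i\otimes_A A'\cong (A')^d$ for each $i$. Then $A'$ is $\ZZ$-finitely generated, and every $y\in\mathcal{Y}(A)$ lifts to a point of $\mathcal{X}_{A'}(A')$; equivalently,
\[
\mathcal{Y}(A)\subset f'\bigl(\mathcal{X}_{A'}(A')\bigr),
\]
where $f':\mathcal{X}_{A'}\to\mathcal{Y}_{A'}$ is the base change. This is the analogue of the twisting lemma of \cite[\S4.3]{JLalg} in the non-density setting.

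To conclude: density of $\mathcal{Y}(A)$ in $Y$ forces $f'(\mathcal{X}_{A'}(A'))$ to be dense in $Y$. I then invoke the elementary fact that if $f\colon X\to Y$ is a finite surjective morphism between irreducible varieties and $S\subset X(k)$ satisfies $\overline{f(S)}=Y$, then $\overline{S}=X$: otherwise $\overline{S}$ would be a proper closed subvariety of $X$, and finiteness of $f$ would give $\dim f(\overline{S})=\dim\overline{S}<\dim Y$, contradicting $\overline{f(S)}=Y$. Applied componentwise to $X$, this shows $\mathcal{X}_{A'}(A')$ is Zariski-dense in $X$, contradicting the hypothesis applied to the model $\mathcal{X}_{A'}$ of $X$ over $A'$. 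The principal obstacle is the use of topological finite generation of $\pi_1^{\et}(\Spec A)$ for a normal finite type $\ZZ$-algebra --- a standard but nontrivial input, coming from resolution of singularities in characteristic zero and Lefschetz-type results --- together with the bookkeeping required to reduce to irreducible $Y$ and to match components of $X$ across the \'etale cover. This bookkeeping is precisely what the stacky Chevalley--Weil formalism of \cite{JLalg} is designed to handle, and I expect the proof to invoke it in essentially the same way as the proof of Theorem \ref{thm:hypsurf_intro} does.
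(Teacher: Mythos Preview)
Your proposal is correct and follows essentially the same approach as the paper's (very terse) proof, which simply invokes the descent argument of Lemma~\ref{lem:descent} together with the Hermite-type finiteness of finite \'etale covers of bounded degree over a normal $\ZZ$-finitely generated domain of characteristic zero --- the latter being exactly what you obtain from topological finite generation of $\pi_1^{\et}(\Spec A)$. One small sharpening for your final step: since $B_y\otimes_A A'\cong (A')^d$, in fact \emph{all} points of each fibre over $y$ lie in $\mathcal{X}_{A'}(A')$, so $\mathcal{X}_{A'}(A')\supset f^{-1}(\mathcal{Y}(A))$ and density in $X$ then follows directly from $f$ being open; this is what makes the ``componentwise'' application of your dimension argument go through when $X$ is not irreducible.
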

\begin{proof}
Use the descent argument used to prove Lemma \ref{lem:descent}  and the following well-known extension of Hermite's finiteness theorem:  if $D\geq 1$ is  an integer and $A$ is  a $\ZZ$-finitely generated normal integral domain of characteristic zero, then the set of $A$-isomorphism classes of finite \'etale morphisms $B\to \Spec A$ of degree at most $d$  is finite    \cite{smallness}.
\end{proof}

\begin{proof}[Proof of Theorem \ref{thm:non_density_hyps}]    
By \cite{Javan3},  there is an integer $D\geq 3$, a smooth affine scheme $U$ over $\ZZ[1/D]$ and a finite \'etale Galois morphism $U\to \mathcal{C}_{d;n, \ZZ[1/D]}$ of stacks over $\ZZ[1/D]$. The assumption on the non-density of integral points on the Hilbert scheme $\mathrm{Hilb}_{d,n}$ implies by  Lemma \ref{lem:descent} that, for every number field $K$ and every finite set of finite places $S$ of $K$,  we have that $\mathcal{C}_{d;n}(\OO_{K,S})$ is not dense in $\mathcal{C}_{d;n}$. Then, as $U\to \mathcal{C}_{d;n, \ZZ[1/D]}$ is surjective, we conclude that, for every number field $K$  and every finite set of finite places $S$ of $K$, the set $U(\OO_{K,S})$ is not dense  in $U$. Note that     $U_{\CC}$ is a smooth affine scheme over $\CC$ which admits a quasi-finite period map  (Section \ref{section:hyps}). Therefore, it follows from Theorem \ref{thm:criterion_nd} and Theorem \ref{thm:geom_hyp_2}  that the non-density of integral points on $U$     persists over finitely generated fields, i.e., for every $\ZZ$-finitely generated integral domain $A$ of characteristic zero, the set $U(A)$ is not dense in $U$. Let $H':= \mathrm{Hilb}_{d,n,\ZZ[1/D]} \times_{\mathcal{C}} U$, where $\mathcal{C}:=\mathcal{C}_{d;n,\ZZ[1/D]}$. Note that $H'\to H$ is  a finite \'etale morphism of schemes over $\ZZ[1/D]$. Since $H'\to U$ is surjective, it follows that, for every $\ZZ$-finitely generated integral domain $A$ of characteristic zero, the set $H'(A)$ is not dense in $H'$.  Therefore, by
 Lemma \ref{lem:cw}, as $H'\to \mathrm{Hilb}_{d,n,\ZZ[1/D]}$ is finite \'etale, we conclude that, for every $\ZZ$-finitely generated   integral domain $A$ of characteristic zero, the set   $\mathrm{Hilb}_{d,n}(A)$ is not dense.
\end{proof}

\begin{remark}
One may wonder whether arguing on the stack (or on $U$) is   necessary and whether one could   simply argue only on the Hilbert scheme in the proof of  Theorem \ref{thm:non_density_hyps}. The problem is that the Hilbert scheme  $\mathrm{Hilb}_{d,n,\CC}$ (over the complex numbers) does \textbf{not} admit a quasi-finite period map, is not geometrically hyperbolic, and is not weakly bounded.  Thus, to use our results on varieties with a quasi-finite period map, one is forced (in our proof of Theorem \ref{thm:non_density_hyps}) to argue   on the moduli stack of smooth hypersurfaces (or its finite \'etale atlas $U$).
\end{remark}

  \bibliography{refsperiod}{}
\bibliographystyle{alpha}

\end{document}